\documentclass[11pt]{article}
\usepackage{}

\usepackage{amsmath,amsthm,verbatim,amssymb,amsfonts,amscd,graphicx, float}

\usepackage[colorlinks]{hyperref}

\usepackage{bm}
\usepackage{tikz}
\usetikzlibrary{arrows,shapes,positioning}
\usetikzlibrary{decorations.markings}
\tikzstyle arrowstyle=[scale=1]
\tikzstyle directed=[postaction={decorate,decoration={markings, mark=at position 0.75 with {\arrow[arrowstyle]{stealth}}}}]
\tikzstyle redirected=[postaction={decorate,decoration={markings, mark=at position 0.35 with {\arrow[arrowstyle]{stealth}}}}]

\newtheorem{theorem}{Theorem}[section]

\newtheorem{problem}[theorem]{Problem}
\newtheorem{lemma}[theorem]{Lemma}
\newtheorem{observation}[theorem]{Observation}

\newtheorem{proposition}[theorem]{Proposition}

\newtheorem{question}[theorem]{Question}

\DeclareMathOperator{\Z}{\mathbb{Z}}

\begin{document}

\title{Integer Flows on  Cyclic Covers of Graphs}
\author{
Rong Luo\thanks{Department of Mathematics, West Virginia University, Morgantown, WV 26506-6310, USA. 
Email address: rluo@mail.wvu.edu.}
~~ 
Yuying Ma\thanks{Department of Mathematics and Statistics, Georgia State University, Atlanta, GA 30303, USA.
Email address: yma29@gsu.edu.}
~~
D. Christopher Stephens\thanks{Department of Mathematical Sciences, Middle Tennessee State University, Murfreesboro, TN 37132, USA. Email address: Chris.Stephens@mtsu.edu.}
~~
Dong Ye\thanks{Department of Mathematical Sciences, Middle Tennessee State University, Murfreesboro, TN 37132, USA. Email address: dong.ye@mtsu.edu.}
}
\date{\relax}
\maketitle

\begin{abstract}
Graph covers preserve the local structure of a graph while allowing its global structure to change substantially. In this paper, we study how such changes in cyclic covers affect nowhere-zero flow properties.
For a fixed integer $\ell \ge 2$, 
we show that for every bridgeless graph $G$, there exists an $\ell$-fold cyclic cover of $G$ that admits a nowhere-zero $3$-flow if $\ell$ is even and a nowhere-zero $4$-flow if $\ell$ is odd. These bounds are best possible in general. 
For odd $\ell$, we further investigate which bridgeless graphs admit an $\ell$-fold cyclic cover with a nowhere-zero $3$-flow. 

\vspace{0.3cm}
\textit{Keywords:} nowhere-zero flows, voltage assignments, cyclic covers.
\end{abstract}

\section{Introduction}

All graphs in this paper are finite and loopless, but may have parallel edges, unless otherwise specified. For terminology and notation not defined here, we follow~\cite{bondy2011}. Let $G$ be a graph and let $D$ be an orientation of $G$. A map $\phi:E(G) \to \Z$ together with the orientation $D$  is an \emph{integer flow} if, for every $v \in V(G)$, 
$$\sum_{e\in E^+_D(v)}\phi(e) = \sum_{e\in E^-_D(v)}\phi(e),$$
 where $E^+_D(v)$ and $E^-_D(v)$ are the sets of edges of $G$ whose tails and heads in $D$ are $v$, respectively. This condition is known as \emph{Kirchhoff's law}. The flow is \emph{nowhere-zero} if $\phi(e) \ne 0$ for every $e \in E(G)$. For an integer $k \ge 2$, \emph{a nowhere-zero $k$-flow} is an integer flow satisfying $1 \leq  |\phi(e)| < k$ for every $e \in E(G)$. Reversing the direction of an edge and negating its flow value does not affect Kirchhoff's law.

Nowhere-zero flows were introduced by Tutte in connection with graph embeddings and map colorings~\cite{tutte1949}. This viewpoint provides a natural extension of classical map-coloring problems from planar graphs to arbitrary graphs~\cite{Tutte1954}. In particular, the Four Color Theorem is equivalent to the statement that every bridgeless planar graph admits a nowhere-zero 4-flow. Tutte~\cite{Tutte1954} conjectured that every bridgeless graph admits a nowhere-zero 5-flow, and Seymour~\cite{Seymour1981} proved that every bridgeless graph admits a nowhere-zero 6-flow. Thus, a fundamental problem in flow theory is to determine the values of $k$ for which bridgeless graphs admit a nowhere-zero $k$-flow.

Graph coverings arise naturally from the theory of covering spaces by viewing graphs as one-dimensional complexes (cf. Chapter~2 in \cite{grosstucker1987}). Voltage graphs, introduced by Gross~\cite{gross1974} and further developed by Gross and Tucker~\cite{grosstucker1977,grosstucker1987}, provide an algebraic framework for constructing and describing graph coverings. In particular, voltage assignments in $\mathbb Z_{\ell}$, the cyclic group of integers modulo $\ell$, give rise to $\ell$-fold cyclic covers, which are the main objects studied in this paper. 

Graph covers provide a natural setting in which to study this flow question from a different perspective. They preserve the local structure of the base graph, for example, the degree of each lifted vertex, while its global structure may change substantially. Since nowhere-zero flows depend strongly on such global properties, it is natural to ask whether a suitable cover can improve the flow properties of a graph.

We now recall the construction of $\ell$-fold cyclic covers. 
Let $G$ be a graph, and fix an arbitrary reference orientation of its edges. For each edge $e \in E(G)$, let $t(e)$ and $h(e)$ denote the \emph{tail} and \emph{head} of $e$, respectively. For an integer $\ell \ge 2$, a \emph{$\Z_\ell$-voltage assignment} on $G$ is a map $\alpha: E(G) \to \Z_\ell$. When the orientation of an edge $e$ is  reversed, its voltage is changed into $-\alpha(e)$. The \emph{$\ell$-fold cyclic cover} of $G$ derived from $\alpha$, denoted by $\widetilde G := G(\alpha,\ell)$, is a graph with vertex set $V(\widetilde G) = V(G) \times \Z_\ell$ and edge set $E(\widetilde G) = E(G) \times \Z_\ell$, where, for each $e \in E(G)$ and $i \in \Z_\ell$, the lifted edge $(e,i)$ joins $(t(e),i)$ and $(h(e),i+\alpha(e))$. Note that the derived cyclic cover may not be connected.  Each lifted vertex $(v,i)$ of $\widetilde G$ has the same degree as $v$, and hence $\delta(\widetilde G) = \delta(G)$ and $\Delta(\widetilde G) = \Delta(G)$. Although the construction uses a reference orientation, the choice is auxiliary and does not affect the resulting cyclic cover. Therefore, throughout the paper, we fix one arbitrary reference orientation and regard each edge of $G$ as carrying this orientation whenever no confusion arises.

Cyclic covers preserve bridgelessness, since in any cyclic cover of a graph $G$, the subgraph corresponding to a cycle of $G$ is a disjoint union of cycles.
Consequently, by Seymour's $6$-flow theorem, every cyclic cover of a bridgeless graph admits a nowhere-zero $6$-flow. This leads to the following question.

\begin{question}\label{ques}
    Let $G$ be a bridgeless graph and let $\ell \ge 2$ be an integer. What is the smallest integer $k$ with  $2 \leq k \le 6$ for which there exists a $\Z_\ell$-voltage assignment $\alpha$ such that $G(\alpha, \ell)$ admits a nowhere-zero $k$-flow?
\end{question}

The case $k = 2$ is completely determined by degree parity. A graph admits a nowhere-zero $2$-flow if and only if every vertex has even degree. Since cyclic covers preserve vertex degrees, a graph $G$ admits a nowhere-zero $2$-flow if and only if every cyclic cover of $G$ does. In other words, if $G$ has a vertex of odd degree, no cyclic cover of $G$ admits a nowhere-zero $2$-flow. 

The next cases, $k = 3$ and $k = 4$, are therefore of particular interest. Related questions have been studied for tensor products of graphs. Zhang, Zheng, and Mamut~\cite{ZZM2007} investigated nowhere-zero $3$-flows in tensor products and obtained sufficient conditions for their existence. Of particular relevance to cyclic covers, Dong, Tay, and Koh~\cite{DTK2014} studied the tensor product $G\times K_2$, which can be viewed as the $2$-fold cyclic cover of $G$ derived from the voltage assignment $\alpha(e) = 1$ for $e \in E(G)$. For graphs satisfying $2 \le \delta(G) \le \Delta(G) \le 3$, they characterized when $G \times K_2$ admits a nowhere-zero $3$-flow. Nowhere-zero flows have also been studied in Cartesian products and bundles of graphs~\cite{IS2003, ROLLOVA2012, SZ2005}. Rollov\'{a} and \v{S}koviera~\cite{ROLLOVA2012} proved that every Cartesian bundle of two graphs without isolated vertices admits a nowhere-zero $4$-flow. An $\ell$-fold cyclic cover of $G$ can be viewed as a Cartesian bundle of $G$ and $\overline{K_\ell}$. However, since $\overline{K_\ell}$ consists entirely of isolated vertices, their result does not apply.

The following is our main result, which gives a sharp general answer to Question~\ref{ques}.

\begin{theorem}\label{thm:t-3-4}
Let $\ell \ge 2$ be an integer and let $G$ be a bridgeless graph. Then, the following hold:
\begin{itemize}
\setlength\itemsep{-0.15cm}
    \item[(i)] If $\ell$ is even, there exists an $\ell$-fold cyclic cover of $G$ admitting a nowhere-zero $3$-flow.
    \item[(ii)] If $\ell$ is odd, there exists an $\ell$-fold cyclic cover of $G$ admitting a nowhere-zero $4$-flow.
\end{itemize}
\end{theorem}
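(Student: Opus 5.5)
Throughout we use Tutte's theorem that a graph admits a nowhere-zero $k$-flow if and only if it admits a nowhere-zero $\Z_k$-flow. So in each case we aim for a group-valued flow on the cover. It is given by a voltage assignment $\alpha$ together with values on the lifted edges $(e,i)$ that depend on $e$ and on $i$ in a controlled way.

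\textbf{(i) $\ell$ even.} I would use $\alpha: E(G)\to\{0,1\}$ and the ansatz $\phi(e,i)=(-1)^i a_e$ with $a_e\in\Z_3\setminus\{0\}$. This is well defined on $\Z_\ell$ precisely because $\ell$ is even. At a lifted vertex $(v,i)$, the out-edges $(e,i)$ with $t(e)=v$ contribute $(-1)^i a_e$. The in-edges $(e,j)$ with $h(e)=v$ and $j+\alpha(e)=i$ contribute $(-1)^{i-\alpha(e)}a_e$. Hence the net outflow at $(v,i)$ is
$$(-1)^i\Bigl(\sum_{e\in E^+(v)}a_e-\sum_{e\in E^-(v)}(-1)^{\alpha(e)}a_e\Bigr).$$
The condition is therefore the same at every lift of $v$: it is a flow condition on the signed graph whose negative edges are those with $\alpha(e)=1$.

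Since $\alpha(e)$ and the reference orientation are at our disposal, each half-edge of $G$ can be given an arbitrary sign $\varepsilon_{v,e}\in\{\pm1\}$. A positive edge realizes opposite signs at its two ends and a negative edge realizes equal signs; reorientation handles the rest. So take $a_e=1$ for every $e$. The problem then reduces to choosing, independently at each vertex $v$ of degree $d$, some $k$ of the $d$ half-edges to be $+1$ so that $2k-d\equiv 0\pmod 3$. Such a $k$ with $0\le k\le d$ exists whenever $d\ge 2$, and $G$ is bridgeless, so $d\ge2$. The half-edge signs then determine $\alpha$ and the orientation, and this yields a nowhere-zero $\Z_3$-flow on $G(\alpha,\ell)$.

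\textbf{(ii) $\ell$ odd.} The sign trick is no longer available. Instead I would aim for a nowhere-zero $\Z_2\times\Z_2$-flow on the cover, which amounts to covering $E(\widetilde G)$ by two even subgraphs. The starting point is Jaeger's theorem that $E(G)$ is covered by three even subgraphs $C_1,C_2,C_3$ (an $8$-flow).

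The full lifts of $C_1$ and $C_2$ are even subgraphs of $\widetilde G$. What remains is to cover the lifts of the edges lying only in $C_3$, and the voltages are meant to help with this. If a cycle $C$ of $G$ carries total voltage $s$, its lift splits into $\gcd(s,\ell)$ cycles. Choosing $\alpha$ on a spanning tree and on the cotree edges of $C_3$ therefore controls which partial lifts of $C_3$ (unions of some of its lift-cycles) are even subgraphs. These can be combined, by symmetric differences with the lifts of $C_1$ and $C_2$, into just two even subgraphs of $\widetilde G$.

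Concretely, I would first try the following. Choose $\alpha$ to be $0$ outside $C_3\setminus(C_1\cup C_2)$ and nonzero on that set. Then express $\widetilde G$ as the union of the two even subgraphs $\widetilde C_1\triangle X$ and $\widetilde C_2\triangle Y$, where $X,Y$ are suitable unions of lift-cycles of $C_3$.

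\textbf{Main obstacle.} The hard step is exactly this merging in part (ii). The construction must ensure that every lifted edge is covered without the symmetric differences uncovering others. It must also work uniformly for all odd $\ell$, including those not divisible by $3$, where lifts of a cycle are never split into three parts. If the three-even-subgraph approach stalls, my fallback is to start instead from Seymour's $\Z_2\times\Z_3$ decomposition. That would use the $\Z_\ell$-voltages on the $\Z_3$-part to produce a second even subgraph in the cover.
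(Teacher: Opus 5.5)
\textbf{Part (i).} Your part (i) is correct, and it takes a more direct route than the paper. The paper reduces to bridgeless cubic graphs and then uses the bipartite cover given by $\alpha\equiv\ell/2$. Your ansatz $\phi(e,i)=(-1)^i a_e$ instead reduces everything to the local count $2k\equiv d\pmod 3$, and it only uses that no vertex has degree $1$.

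There is one slip. With $a_e=1$, an edge of voltage $1$ contributes $+1$ at both ends however it is oriented: reversal turns the voltage into $-1$, and $(-1)^{-1}=-1$. So an edge whose two half-edges are both chosen to be $-1$ needs $a_e=-1$. Your ansatz permits this, so nothing breaks.

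\textbf{Part (ii) is not proved.} You name the merging step as the main obstacle but never carry it out. The concrete plan you propose also fails, already for the Petersen graph $P$.

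Take $C_j=E(P)\setminus M_j$ for three distinct perfect matchings $M_1,M_2,M_3$. Every edge of $P$ lies in exactly two of its six perfect matchings, and any two of them share exactly one edge. Hence these three $2$-factors cover $E(P)$, and $C_3\setminus(C_1\cup C_2)=M_1\cap M_2=\{f\}$ for a single edge $f=uv$.

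With $\alpha$ nonzero only on $f$, the cover consists of $\ell$ copies of $P-f$ linked by the lifts of $f$. Copy $i$ is separated from the rest by the two lifts of $f$ incident with $(u,i)$ and $(v,i)$. By the parity lemma, these two edges receive the same color in any $3$-edge-coloring. Adding the edge $(u,i)(v,i)$ in that color would then $3$-edge-color $P$, which is impossible. So this cubic cover has no nowhere-zero $4$-flow.

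Deciding where the nonzero voltages go is the whole content of (ii), and the proposal supplies no mechanism for it. The Seymour fallback is only named.

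\textbf{The missing idea} is the paper's two-step argument.

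First, reduce to bridgeless cubic graphs. Suppress $2$-vertices. Replace each vertex of degree $d\ge4$ by a path of $d-2$ cubic vertices, ordering its edges so that the first and last lie on a common cycle; this keeps the graph bridgeless. At the end, switch the voltages to $0$ on these paths (which form a forest) and contract their lifts.

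Second, handle a bridgeless cubic graph directly. Take a perfect matching $M$. Put voltage $1$ on one edge $f_C=v_Cu_C$ of each odd cycle $C$ of $G-M$, and $0$ elsewhere. Then $3$-edge-color $H=G-F$ with $M$ colored $3$, color $1$ missing at $v_C$, and color $2$ missing at $u_C$. Color copy $i$ of $H$ by this coloring followed by a permutation:
\begin{itemize}
\item $(2\,3)$ for $i=0$;
\item $(1\,2)$ for odd $i$;
\item the identity for even $i\in\{2,\dots,\ell-3\}$;
\item $(1\,3)$ for $i=\ell-1$.
\end{itemize}
Then $(v_C,i)$ and $(u_C,i+1)$ always miss a common color, which is used on $(f_C,i)$.

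This non-uniform sequence of permutations is exactly what your part-(i) device cannot provide. The analogous ansatz $\phi(e,i)=\sigma^i(a_e)$ in $\mathbb{Z}_2\times\mathbb{Z}_2$ needs an automorphism $\sigma$ with $\sigma^\ell=\mathrm{id}$. For odd $\ell$ with $3\nmid\ell$, only $\sigma=\mathrm{id}$ qualifies. For $3\mid\ell$, taking $\sigma$ to be a $3$-cycle on the nonzero elements does make your method work.
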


Both bounds in Theorem~\ref{thm:t-3-4} are best possible in general. The value $3$ for even $\ell$ cannot be reduced to $2$ for graphs with a vertex of odd degree. For odd $\ell$, there exist bridgeless graphs for which no $\ell$-fold cyclic cover admits a nowhere-zero $3$-flow. 
We further investigate this phenomenon later. In particular, for alternating double wheels, we characterize when a $3$-fold cyclic cover admitting a nowhere-zero $3$-flow exists, and give an example for which no odd-fold cyclic cover admits such a flow. 

The remainder of the paper is organized as follows. In the next section, we first consider cubic graphs. In Section~\ref{sec:main-proof}, we prove Theorem~\ref{thm:t-3-4} and give an application to Cartesian bundles. We conclude the paper by investigating which graphs admit $\ell$-fold cyclic covers with nowhere-zero $3$-flows when $\ell$ is odd.

\section{Cyclic covers of cubic graphs}\label{sec:cubic}
In this section, we focus on cubic graphs. We treat the cases of even and odd $\ell$ separately. We begin with the even case.

The characterization on cubic graphs below is due to Tutte~\cite{tutte1949},
while the general regular bipartite case follows from~\cite[Exercise~9.7]{zhang1997}.

\begin{lemma}\label{lem:regular-bipartite}
Every $d$-regular bipartite graph with $d \ge 3$ admits
a nowhere-zero $3$-flow. Moreover, a cubic graph admits
a nowhere-zero $3$-flow if and only if it is bipartite.
\end{lemma}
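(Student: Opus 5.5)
The first claim will follow from an explicit flow. Let $H$ be $d$-regular bipartite with bipartition $(A,B)$, and orient every edge from $A$ to $B$. A constant flow $\phi\equiv 1$ fails Kirchhoff's law, since each vertex of $A$ has outflow $d$ and inflow $0$. So I would work modulo $3$ and then lift.

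The mod-$3$ step needs a nowhere-zero $\Z_3$-flow. Choosing values in $\{1,2\}$ on the edges with the net outflow at each vertex $\equiv 0 \pmod 3$ is a congruence problem. By Tutte's equivalence between integer and modular flows, a nowhere-zero $\Z_3$-flow gives a nowhere-zero $3$-flow. The cleanest construction uses K\"onig's theorem: $H$ decomposes into $d$ perfect matchings $M_1,\dots,M_d$.

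For $d\ge 3$, write $d=3q+r$ with $r\in\{0,1,2\}$. Give value $1$ (oriented $A\to B$) on $M_1,\dots,M_{d-r}$ if $r=0$, so that each vertex sees net flow $d\equiv 0$. For $r=1$, we have $d\ge 4$. Use value $2$ on $M_1$ and $M_2$ and value $1$ on the remaining $d-2$ matchings. The net flow at each vertex is $4+(d-2)=d+2\equiv 0$. For $r=2$, use value $2$ on $M_1$ and value $1$ on the other $d-1$. The net flow is $2+(d-1)=d+1\equiv 0$. In every case each vertex of $A$ has the same outflow and each vertex of $B$ the same inflow, both $\equiv 0 \pmod 3$. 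This gives a nowhere-zero $\Z_3$-flow, which lifts to a nowhere-zero $3$-flow.

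Alternatively, I can avoid Tutte's lifting theorem and stay with integers. The matchings pair up into $2$-regular spanning subgraphs $M_i\cup M_j$, which are unions of even cycles, and each carries a $\pm1$ circulation. I would then combine even cycle unions with a $\pm2$ structure, but the modular route is simpler, so I commit to it.

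For the second claim, the "if" direction is the case $d=3$ just proved. For "only if", suppose a cubic graph $G$ has a nowhere-zero $3$-flow $\phi$. Reverse edges so that all values are positive, i.e.\ in $\{1,2\}$. At a degree-$3$ vertex, Kirchhoff's law with values in $\{1,2\}$ forces the pattern $2=1+1$: one edge of value $2$ and two edges of value $1$ on the opposite side. The configurations $1+1+\cdot=0$ with all edges on one side are impossible.

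So each vertex is either a \emph{source type}, with the value-$2$ edge entering and the two value-$1$ edges leaving, or a \emph{sink type}, with the reverse. Every edge goes from one type to the other. Indeed, a value-$1$ edge leaves a source type and enters a sink type, and a value-$2$ edge leaves a sink type and enters a source type. Hence the two types form a bipartition, and $G$ is bipartite.

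The main subtlety is the first part's reliance on lifting a $\Z_3$-flow to an integer $3$-flow. I would cite Tutte's theorem for this. Otherwise, since each vertex has constant modular net flow, I could check directly that the matching values in $\{1,2\}$ with the fixed $A\to B$ orientation already give an integer flow after adjusting signs (using $2\equiv -1$). The concrete adjustment: for $r=0$, assign $+1$ on $2q$ matchings and $-2$ on $q$ matchings, giving exact net flow $0$. For the other residues, similar integer choices exist ($d=4$: values $1,1,-1,-1$).
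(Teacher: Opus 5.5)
Your proof is correct. The paper does not prove this lemma at all: it attributes the cubic characterization to Tutte and the regular bipartite case to an exercise in Zhang's book. Your argument therefore replaces a citation with a self-contained proof.

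Your argument for the ``only if'' direction of the cubic claim is the standard one. At each vertex, one value-$2$ edge lies on one side and two value-$1$ edges on the other, and every edge joins the two types. It is usually phrased with modulo-$3$ orientations: a $3$-vertex must have outdegree $0$ or $3$, so the sources and sinks form the bipartition.

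For the first part, the detour through $\Z_3$ and Tutte's lifting theorem is not needed. The integer version you sketch at the end is the cleaner proof. Orient every edge from $A$ to $B$ and put a constant value $c_j$ on $M_j$. Since each vertex meets each $M_j$ exactly once, Kirchhoff's law holds everywhere exactly when $c_1+\cdots+c_d=0$. Such a sequence with all $c_j\in\{\pm1,\pm2\}$ exists for every $d\ge 2$: use pairs $1,-1$ for even $d$, and $1,1,-2$ followed by such pairs for odd $d$. This makes the first part entirely elementary given K\"onig's theorem. K\"onig's edge-colouring theorem holds for bipartite multigraphs, so the parallel edges the paper allows cause no difficulty.
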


\begin{theorem}\label{thm:regular-l-2}
Let $\ell \ge 2$ be an integer and  let $G$ be a $d$-regular graph with $d \ge 3$. 

\medskip \noindent
(1) If $G$ is  bipartite, then every  $\ell$-fold cyclic cover of $G$  admits a nowhere-zero $3$-flow.

\medskip \noindent
(2) If $\ell$ is even, then there exists an $\ell$-fold cyclic cover of $G$ that admits a nowhere-zero $3$-flow.
\end{theorem}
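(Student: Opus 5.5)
Both parts will be reduced to Lemma~\ref{lem:regular-bipartite}. The cover in each case will be shown to be $d$-regular and bipartite with $d\ge 3$, and then the lemma gives the nowhere-zero $3$-flow. Regularity is automatic, since every lifted vertex $(v,i)$ has the same degree as $v$, so $\widetilde G=G(\alpha,\ell)$ is $d$-regular. It will not matter whether $\widetilde G$ is connected: a disjoint union of $d$-regular bipartite graphs is again $d$-regular bipartite, and the lemma applies to it directly (or to each component separately, after which the flows are combined).

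For (1), let $(A,B)$ be a bipartition of $G$. For an arbitrary $\Z_\ell$-voltage assignment $\alpha$, set $\widetilde A=A\times\Z_\ell$ and $\widetilde B=B\times\Z_\ell$. Each lifted edge $(e,i)$ joins $(t(e),i)$ to $(h(e),i+\alpha(e))$, and exactly one of $t(e),h(e)$ lies in $A$. Hence every edge of $\widetilde G$ goes between $\widetilde A$ and $\widetilde B$, so $\widetilde G$ is bipartite and Lemma~\ref{lem:regular-bipartite} finishes this part.

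For (2), with $\ell$ even, choose the constant assignment $\alpha(e)=1$ for every $e\in E(G)$. Then each lifted edge joins $(t(e),i)$ to $(h(e),i+1)$. Because $\ell$ is even, the parity of an element of $\Z_\ell$ is well defined, so we may split $V(\widetilde G)$ by the parity of the second coordinate. Every edge changes this parity, so the two classes form a bipartition and $\widetilde G$ is bipartite; Lemma~\ref{lem:regular-bipartite} again applies. (For $\ell=2$ this cover is the tensor product $G\times K_2$.)

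There is no real obstacle here. The only points needing care are that parity in $\Z_\ell$ is well defined exactly because $\ell$ is even, and that a possibly disconnected cover is handled correctly by the lemma. Both are checked above.
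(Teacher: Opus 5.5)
Your proof is correct and follows the paper's approach: show the cover is $d$-regular and bipartite, then invoke Lemma~\ref{lem:regular-bipartite}. Part (1) is identical to the paper's. In (2), the paper uses the constant voltage $\ell/2$ and splits the cover into $\ell/2$ disjoint copies of $G\times K_2$, whereas your constant voltage $1$ (indeed any all-odd assignment) gives the bipartition directly through the parity map $\Z_\ell\to\Z_2$, which is slightly more direct.
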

\begin{proof} (1)  Let $(A,B)$ be a bipartition of  $G$.  Then for every $\Z_\ell$-voltage assignment $\alpha$, the sets $A \times \Z_\ell$ and $B \times \Z_\ell$ form a bipartition of $G(\alpha,\ell)$.  Thus $G(\alpha,\ell)$ is bipartite. Therefore it admits a nowhere-zero $3$-flow by Lemma~\ref{lem:regular-bipartite}. 

\medskip
(2) Assume that $\ell$ is even. Define a $\Z_\ell$-voltage assignment $\alpha_\ell$ by
\[
\alpha_\ell(e)= \frac{\ell}{2} \quad \text{for every $e\in E(G)$.}
\]

For each $i \in \{0,1,\dots,\ell/2-1\}$, every lifted edge of $G(\alpha_\ell, \ell)$ with one endpoint in $V(G) \times \{i, i+\ell/2\}$ has its other endpoint in the same vertex set. Also, the subgraph of $G(\alpha_\ell, \ell)$  induced by $V(G) \times \{i, i+\ell/2\}$ is isomorphic to $G(\alpha_2,2)$. Thus, 
 $G(\alpha_\ell, \ell)$ is the disjoint union of $\ell/2$ copies of $G(\alpha_2,2)$. Therefore it is sufficient to show  that $G(\alpha_2,2)$ admits a nowhere-zero $3$-flow.

 Since  $\alpha_2(e) = 1$ for every edge $e\in E(G)$, every edge in   $G(\alpha_2, 2)$ has one endpoint in $ V(G) \times \{0\}$ and one in $ V(G) \times \{1\}$, so $G(\alpha_2, 2)$ is a bipartite graph with bipartition $V(G) \times \{0\}$ and $V(G) \times \{1\}$. Moreover, since $G$ is $d$-regular, $G(\alpha_2, 2)$ is also $d$-regular. 
Therefore, $G(\alpha_2, 2)$ is a $d$-regular bipartite graph with $d \ge 3$.
Applying Lemma~\ref{lem:regular-bipartite} to $G(\alpha_2, 2)$, we conclude that $G(\alpha_2, 2)$ admits a nowhere-zero $3$-flow.
\end{proof} 

We now give a complete characterization for cubic graphs.

\begin{theorem}\label{thm:t-3-cubic}
Let $\ell \geq 2$ be an integer and $G$ be a cubic graph. Then  there is an $\ell$-fold cyclic cover of $G$ admitting a nowhere-zero $3$-flow  if and only if  either $\ell$ is even or $G$ is bipartite. 
\end{theorem}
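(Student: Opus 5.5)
The sufficiency direction follows directly from Theorem~\ref{thm:regular-l-2} with $d=3$. If $\ell$ is even, part~(2) gives an $\ell$-fold cyclic cover with a nowhere-zero $3$-flow. If $G$ is bipartite, part~(1) shows that every $\ell$-fold cyclic cover has one.

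For necessity, I assume that $\ell$ is odd and that some cover $\widetilde G = G(\alpha,\ell)$ admits a nowhere-zero $3$-flow; I want to show $G$ is bipartite. Since $\widetilde G$ is cubic, Lemma~\ref{lem:regular-bipartite} gives that $\widetilde G$ is bipartite. The lemma is stated for cubic graphs, which I read as allowing disconnected ones; otherwise I apply it to each component, since a flow restricts to components and each component is still cubic. The task is then to lift an odd cycle of $G$ to an odd cycle of $\widetilde G$.

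Suppose $G$ is not bipartite, and let $C = v_0e_1v_1\cdots e_kv_0$ be an odd cycle of $G$, so $k$ is odd. Let $s=\sum \pm\alpha(e_j)\in \Z_\ell$ be the net voltage of $C$, with signs taken by traversal direction against the reference orientation. Starting at $(v_0,0)$ and following the lifts of $e_1,\dots,e_k$ in order, I obtain a walk in $\widetilde G$ of length $k$ ending at $(v_0,s)$. Repeating this traversal $m$ times, where $m$ is the order of $s$ in $\Z_\ell$, returns to $(v_0,0)$. This gives a closed walk of length $mk$, and in fact a cycle, because the lift of $C$ is a disjoint union of cycles. Since $m$ divides $\ell$ and $\ell$ is odd, $m$ is odd, so $mk$ is odd. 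Hence $\widetilde G$ contains an odd cycle, which contradicts its bipartiteness. Even without the cycle claim, a closed walk of odd length already forces an odd cycle, so the argument does not depend on it.

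The main point to check is this parity argument: that the lift of an odd cycle closes up after an odd number of rounds exactly because $\ell$ is odd. The remaining steps are direct citations of the earlier results.
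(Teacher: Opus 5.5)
Your proof is correct and follows the paper's argument. Sufficiency comes from Theorem~\ref{thm:regular-l-2}, and necessity comes from lifting an odd cycle $C$ of $G$ to an odd cycle in $\widetilde G$ and invoking Lemma~\ref{lem:regular-bipartite}. The only difference is cosmetic: the paper obtains the odd cycle by counting (the lift of $C$ is a disjoint union of cycles with $\ell|C|$ vertices in total, which is odd), whereas you trace the lift explicitly and use that the order $m$ of the net voltage divides the odd number $\ell$.
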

\begin{proof} The sufficiency directly follows Theorem~\ref{thm:regular-l-2}.

Now we prove  the necessity.   Suppose that  some $\ell$-fold cyclic cover $\widetilde G:= G(\alpha, \ell)$ admits a nowhere-zero $3$-flow.  We  show that either $\ell$ is even or $G$ is bipartite. Suppose  by contradiction that $\ell$ is odd and $G$ is nonbipartite.  Then $G$ contains an odd cycle $C$. Since the subgraph $\widetilde C := C(\alpha|_{E(C)}, \ell)$ is a disjoint union of cycles in $\widetilde G$ and $|\widetilde C| = \ell |C|$.  Since both $|C|$ and $\ell$ are odd, $|\widetilde C|$ is odd, so  at least one component of $\widetilde C$ is an odd cycle. Thus $\widetilde G$ is not bipartite. Since $G$ is cubic,  $\widetilde G$ doesn't admit a  nowhere-zero $3$-flow by Lemma~\ref{lem:regular-bipartite}, a contradiction.
\end{proof}

The preceding characterization shows that when $\ell$ is odd, a nonbipartite cubic graph has no $\ell$-fold cyclic cover admitting a nowhere-zero $3$-flow. We therefore study nowhere-zero $4$-flows. 

By a classical result of Tutte~\cite{TUTTE1966}, a cubic graph admits a nowhere-zero $4$-flow if and only if it is $3$-edge-colorable. Since every cyclic cover of a cubic graph is also cubic, finding an $\ell$-fold cyclic cover admitting a nowhere-zero $4$-flow is equivalent to finding a $3$-edge-colorable $\ell$-fold cyclic cover. 

\begin{theorem}\label{thm:t-4-cubic}
    Let $\ell \ge 3$ be an odd integer and let $G$ be a bridgeless cubic graph. There exists an $\ell$-fold cyclic cover of $G$ that is $3$-edge-colorable and hence admits a nowhere-zero $4$-flow.
\end{theorem}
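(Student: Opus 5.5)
The plan is to build the required $3$-edge-colouring of a cover $\widetilde G$ from a perfect matching and a $2$-factor of $\widetilde G$. Recall that a cubic graph is $3$-edge-colourable exactly when it has a perfect matching whose complementary $2$-factor consists of even cycles. For $\widetilde G$ this is the target; the construction below is a plan whose central step is not yet verified.

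\textbf{Step 1: the base structure.} By Petersen's theorem, $G$ has a perfect matching $M$. Then $F=G-M$ is a $2$-factor, and the number of odd cycles $C_1,\dots,C_{2t}$ in $F$ is even.

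\textbf{Step 2: why lifting $F$ alone fails.} Put voltage $0$ on all edges of $F$. Then each cycle $C$ of $F$ lifts to the $\ell$ disjoint copies $C\times\{i\}$, $i\in\Z_\ell$, and $M$ lifts to a perfect matching $\widetilde M$ of $\widetilde G$. This can never suffice by itself. A cycle of odd length $k$ with any net voltage $s$ lifts to $\gcd(s,\ell)$ cycles of length $k\ell/\gcd(s,\ell)$, and this length is odd because $\ell$ is odd. So the lifted $2$-factor $\widetilde F$ always contains the $2t\ell$ odd cycles $C_j\times\{i\}$. The colouring of $\widetilde G$ therefore cannot be a lift of a colouring of $G$. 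Instead, I must change $\widetilde M$ inside $\widetilde G$, and the voltages on $M$ must be chosen to make this possible.

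\textbf{Step 3: pairing odd cycles by symmetric differences.} The mechanism is the standard one. Take a cycle $Q$ of $\widetilde G$ that alternates between $\widetilde M$-edges and paths in $\widetilde F$. Replacing $\widetilde M$ by $\widetilde M\triangle E(Q')$, for a suitable alternating modification $Q'$, merges or splits cycles of the $2$-factor. If done correctly, two odd cycles are merged into one even cycle.

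\textbf{Step 4: choosing the voltages.} Since $G$ is bridgeless, the odd cycles of $F$ can be grouped into pairs. Each pair $C_a,C_b$ can be joined by a path $P$ in $G$ that alternates between $M$-edges and subpaths of cycles of $F$; this is the usual argument that $M$-edges connect the cycles of $F$ into a connected structure. I then assign voltage $1$ to one chosen $M$-edge of $P$ and voltage $0$ to the others. In the cover this does two things.
\begin{itemize}
\item For each $i$, the copy $C_a\times\{i\}$ is linked to the copy $C_b\times\{i\}$ or to $C_b\times\{i+1\}$.
\item The $2t\ell$ odd lifted cycles are organised into chains along the index $i$.
\end{itemize}
Along these chains I perform the $\widetilde M$-switch of Step 3 for consecutive pairs of odd copies. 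The total count $2t\ell$ is even, so a perfect pairing of the odd lifted cycles exists even though each $C_j$ contributes the odd number $\ell$ of copies. This is the only place where the parity of $\ell$ is used, and it is what forces the use of voltages that mix different base cycles.

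\textbf{Main obstacle.} The main obstacle is making each switch produce only even cycles without creating new odd ones. A merged cycle has length $|C_a|+|C_b|$ plus twice the number of connecting edges, adjusted by the parity of the sub-paths of $\widetilde F$ that are removed. The parities of the chosen sub-paths (which side of each odd cycle the connection uses) must be arranged so that every resulting cycle is even. The switches must also be pairwise disjoint; this should follow from taking different indices $i$ for different pairs, using the $\ell\ge 3$ copies. I would first try the case where the pairs $C_a,C_b$ are joined by a single $M$-edge. Here the switch replaces that edge and its translate by two $\widetilde F$-edges, producing one cycle of even length $|C_a|+|C_b|+2$. I would then reduce the general case to this one by contracting even cycles of $F$ and arguing by induction on the distance between paired odd cycles.
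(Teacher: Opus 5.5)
Your proposal defers the step that would actually constitute the proof, and the one concrete instance you give is not a valid switch. With voltage $0$ on $F$ and voltage $1$ on a single $M$-edge $uv$ joining $C_a$ to $C_b$, the lift $(u,i)(v,i+1)$ and its translate $(u,i+1)(v,i+2)$ join $C_a\times\{i\}$ to $C_b\times\{i+1\}$ and $C_a\times\{i+1\}$ to $C_b\times\{i+2\}$. No $\widetilde F$-edge joins two of their four ends, so exchanging them for two $\widetilde F$-edges does not give a perfect matching. A $4$-cycle switch that merges two cycles needs two matching edges between the \emph{same} pair of cycles, at adjacent vertices. More generally, each lift of your path $P$ joins one copy of $C_a$ to one copy of $C_b$ exactly once. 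You never say what closes this into an $\widetilde M$-alternating cycle, or why the resulting cycles are even and the switches disjoint. The count $2t\ell$ is not where the difficulty lies: $2t$ is already even in $G$, yet for a snark the odd cycles of $F$ cannot be removed there. The voltages must create alternating structure that $G$ lacks, and the proposal never identifies it.

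Worse, the voltage scheme itself fails, so no choice of switches can rescue it. Take two copies $P_1,P_2$ of the Petersen graph, delete an edge $x_jy_j$ from each, and add the edges $x_1x_2$ and $y_1y_2$. Let $M=M_1\cup M_2$, where $M_j$ is a perfect matching of $P_j$ avoiding $x_jy_j$; this is a legitimate output of your Step 1. Then $G-M$ consists of a $5$-cycle inside each $P_j$ and a $10$-cycle through $x_1x_2$ and $y_1y_2$. So $t=1$, and your Step 4 puts voltage $1$ on exactly one $M$-edge. Since $M$ contains neither connecting edge, that $M$-edge lies on one side, say that of $P_1$. All edges on the $P_2$-side and both connecting edges then have voltage $0$, so in $\widetilde G$ each layer copy of $P_2-x_2y_2$ is attached to the rest by a $2$-edge-cut. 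That copy has $10$ vertices, so every colour class of a $3$-edge-colouring, being a perfect matching, meets the cut in an even number of edges. Hence both cut edges get the same colour, and adding $x_2y_2$ in that colour would $3$-edge-colour the Petersen graph. So the cover you build is not $3$-edge-colourable.

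Your Step 2 is right that $\widetilde M$ cannot be a colour class, but moving the voltages onto $M$ is the wrong remedy. The paper instead puts voltage $1$ on one edge $f_C$ of \emph{every} odd cycle $C$ of $G-M$; in the example above that is one edge on each side. It colours $H=G-\{f_C\}$ explicitly, with $M$ in colour $3$ and the even cycles and even-length paths of $H-M$ alternating $1,2$, so that the ends of each $f_C$ miss $1$ and $2$. It then colours layer $i$ by a permutation of this colouring: $2\leftrightarrow3$ on layer $0$, $1\leftrightarrow3$ on layer $\ell-1$, and alternately $1\leftrightarrow2$ and the identity in between. With these permutations the two ends of every lifted $f_C$ share a missing colour. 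The perfect matching is thus changed by recolouring whole layers, not by alternating-cycle switches.
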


\begin{proof}
By Petersen's theorem~\cite{Petersen1891}, the bridgeless cubic graph $G$ admits a perfect matching $M$. Since $G$ is cubic, $G-M$ is a $2$-factor, which consists of vertex-disjoint cycles. 
Let $\mathcal{C}_o$ denote the set of odd cycles of $G-M$. 
For each $C \in \mathcal{C}_o$, choose an edge $f_C\in E(C)$, and set $F:=\{f_C\mid C \in \mathcal{C}_o\}$. Then, $F$ is a matching in $G$. Let $H:= G-F$. Then $H-M$ consists of vertex-disjoint even cycles and paths, where each path has an even number of edges. Thus, $H$ has a proper $3$-edge-coloring $\varphi_H: E(H) \to [3]:=\{1,2,3\}$ such that 

\noindent
$\bullet$ $\varphi_H(e) = 3$  if $e\in M$,

\noindent
$\bullet$ each component of $H-M$ is colored with $1$ and $2$ alternatively,

\noindent
$\bullet$ the two leaf edges  of each path component  in $H-M$ are colored with different colors since each path has an even number of edges.

For each vertex $x \in V(H)$, let $\overline{\varphi_H}(x)$ denote the set of colors in $[3]$ not used by any edge incident with $x$. For each $f_C\in F$, denote these endpoints  of $f_C$ by $v_C$ and $u_C$, respectively, so that  the edge $f_C$ is oriented from $v_C$ to $u_C$.   Then, we can choose $\varphi_H$ such that  
$$
1 \in \overline{\varphi_H}(v_C) \quad\text{and}\quad 2 \in \overline{\varphi_H}(u_C).
$$

Define a voltage assignment $\alpha : E(G) \to \Z_\ell$ by
\[
    \alpha(e)=
    \begin{cases}
        0, & e\in E(H),\\
        1, & e\in F.
    \end{cases}
\] 
Let $\widetilde G:=G(\alpha,\ell)$ and for each $i\in \Z_\ell$, let $G_i:=\widetilde G[V_i]$, where $V_i:= \{(x,i) \mid x \in V(G)\}$. 
Then, for every $i \in \Z_\ell$, $G_i\cong H$ and the lifted edge $(f_C,i)$ joins $(v_C, i)$ and $(u_C, i+1)$ for each $f_C \in F$.

We now define a proper $3$-edge-coloring $\varphi_i$ of $G_i$ in $\widetilde G$ from $\varphi_H$ by permuting the colors as follows:
\[
\begin{array}{c|c}
i & \text{permutation of colors} \\ \hline
i = 0 & 2\leftrightarrow 3\\
i\in \{1,3,\ldots,\ell-2\} & 1\leftrightarrow 2\\
i\in \{2,4,\ldots,\ell-3\} & \text{identity}\\
i = \ell-1 & 1\leftrightarrow 3
\end{array}
\]
Since color 1 is missing at $v_C$ and color 2 is missing at $u_C$ under $\varphi_H$, the missing colors at $(v_C, i)$ and $(u_C,i)$ under $\varphi_i$ are
\[
\begin{array}{c|c|c}
i & (v_C,i) & (u_C,i) \\ \hline
i = 0 & 1 & 3 \\
i\in \{1,3,\ldots,\ell-2\} & 2 & 1 \\
i\in \{2,4,\ldots,\ell-3\} & 1 & 2 \\
i = \ell-1 & 3 & 2.
\end{array}
\]
Consequently, for every $i\in \mathbb Z_{\ell}$, the vertices $(v_C, i)$ and $(u_C, i+1)$ have a common missing color: it is color 1 when $0\le i\le \ell -3$ is even, and color 2 when $1\le i\le \ell-2$ is odd, and color 3 when $i=\ell-1$.

We now color the lifted edges of $F$. Let $f_C \in F$ for some $C \in C_o$. For each $i\in \{0,1,\dots,\ell-2\}$, assign color $1$ to $(v_C,i)(u_C,i+1)$ when $i$ is even, and color $2$ when $i$ is odd. For the remaining edge $(v_C,\ell-1)(u_C,0)$, assign color $3$. Clearly, each assigned color is missing at both endpoints of the corresponding edge. Moreover, since $F \subset E(G)$ is a matching, the set $\widetilde F := \{(f,i) \mid f \in F, i\in \Z_\ell\}$ is also a matching in $\widetilde G$. Every vertex of $\widetilde G$ is incident with at most one edge of $\widetilde F$. Therefore, this procedure can be applied independently for each $f \in F$ to color all edges of $\widetilde F$. Together with the colorings $\varphi_i$ of each $G_i$, we obtain a proper $3$-edge-coloring of $\widetilde G$. Since $\widetilde G$ is cubic, Tutte's result implies that $\widetilde G$ admits a nowhere-zero $4$-flow.
\end{proof}

\section{General case -- Proof of Theorem~\ref{thm:t-3-4}}
\label{sec:main-proof}

A vertex of degree $d$  is called a \emph{$d$-vertex}.  The proof proceeds by reducing $G$ to a bridgeless cubic graph: first suppress all $2$-vertices and then replace vertices at least four by suitable paths so that the vertices on the path are   $3$-vertices. 
 After applying the cubic case to the resulting graph, we use switching to reduce the voltages on these paths to zero, allowing them to be contracted in the resulting cyclic cover.

\subsection{Reduction tools}
Let $G$ be a graph and let $v$ be a $2$-vertex of $G$ with neighbors $x$ and $y$, where $x$ and $y$ are not necessarily distinct. Suppressing $v$ means deleting $v$ and then adding a new edge joining $x$ and $y$ if $x\not = y$. If $x = y$, no new edge is added. We call this operation \emph{the suppression of $v$}. Clearly, if $G$ is bridgeless and admits a nowhere-zero $k$-flow, then the resulting graph after suppressing some $2$-vertices  remains bridgeless and admits a nowhere-zero $k$-flow as well. 

\begin{observation}\label{obs:suppression}
Let $G$ be a bridgeless graph, let $v$ be a $2$-vertex of $G$, and let $G'$ be obtained from $G$ by suppressing $v$. Then, for every integer $\ell \ge 2$ and every  $\Z_\ell$-voltage assignment $\alpha$ on $G'$, there exists a $\Z_\ell$-voltage assignment $\beta$ on $G$ such that $G'(\alpha,\ell)$ is obtained from $G(\beta,\ell)$ by suppressing all lifts of $v$.
\end{observation}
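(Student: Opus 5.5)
We split into two cases according to whether the neighbors $x$ and $y$ of $v$ are distinct, and define $\beta$ by splitting the voltage of the new edge across the two edges at $v$. Fix the reference orientations so that the edges at $v$ are $e_1$ from $x$ to $v$ and $e_2$ from $v$ to $y$. When $x\neq y$, write $e$ for the new edge of $G'$, oriented from $x$ to $y$. Every other edge of $G$ is an edge of $G'$ and keeps its orientation.

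\emph{Case $x\ne y$.} Define $\beta(e_1):=\alpha(e)$, $\beta(e_2):=0$, and $\beta(g):=\alpha(g)$ for every other edge $g$.
\begin{itemize}
\item In $G(\beta,\ell)$, each lift $(v,i)$ is a $2$-vertex.
\item Its two incident lifted edges are $(e_1,i-\alpha(e))$, joining $(x,i-\alpha(e))$ to $(v,i)$, and $(e_2,i)$, joining $(v,i)$ to $(y,i)$.
\item Suppressing $(v,i)$ therefore yields an edge joining $(x,j)$ and $(y,j+\alpha(e))$, where $j=i-\alpha(e)$.
\item As $i$ ranges over $\Z_\ell$, so does $j$, and these new edges are exactly the lifts $(e,j)$ in $G'(\alpha,\ell)$.
\end{itemize}
All remaining vertices and edges of the two covers coincide, so $G'(\alpha,\ell)$ is obtained from $G(\beta,\ell)$ by suppressing all lifts of $v$. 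Any distribution $\beta(e_1)+\beta(e_2)=\alpha(e)$ would work equally well.

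\emph{Case $x=y$.} Here $G'=G-v$, and $e_1,e_2$ are parallel edges between $x$ and $v$.
\begin{itemize}
\item Set $\beta(e_1)=\beta(e_2)=0$, and let $\beta$ agree with $\alpha$ elsewhere.
\item Each $(v,i)$ is then a $2$-vertex whose two incident edges both go to $(x,i)$.
\item By definition, suppressing it deletes $(v,i)$ and adds no edge.
\item The result is $(G-v)(\alpha,\ell)=G'(\alpha,\ell)$.
\end{itemize}
With other voltages the neighbors $(x,i-\beta(e_1))$ and $(x,i-\beta(e_2))$ of $(v,i)$ could be distinct, and suppression would add an edge between them. Choosing equal voltages, here zero, avoids this.

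The only care needed is the bookkeeping of orientations and indices in the first case, so that the suppressed edges are precisely the lifts of $e$. No step presents a genuine obstacle.
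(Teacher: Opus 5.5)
Your proof is correct and is essentially the paper's argument: the paper also routes the whole voltage $\alpha(e)$ through one of the two edges at $v$ (it uses $vy$, you use $xv$), and it gives both edges voltage $0$ when $x=y$. One harmless slip is in your closing aside. Under your convention $e_2$ runs from $v$ to $x$, so the neighbor of $(v,i)$ along $e_2$ is $(x,i+\beta(e_2))$, not $(x,i-\beta(e_2))$. The condition you need is therefore $\beta(e_1)+\beta(e_2)=0$ rather than equal voltages, and your choice of zeros satisfies it anyway.
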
 
\begin{proof}
    Let $x$ and $y$ be the neighbors of $v$.  
    We use the reference orientation inherited from $G$ on edges common to $G$ and $G'$. If $x \ne y$, let $e$ be the edge obtained by suppressing $v$, with reference orientation from $x$ to $y$. Let $G'(\alpha,\ell)$ be an $\ell$-fold cyclic cover of $G'$.

    Suppose that $x \ne y$. Define a voltage assignment $\beta: E(G) \to \Z_\ell$ by $\beta(f) = \alpha(f)$ for every edge $f$ not incident with $v$ and set $\beta(xv) = 0$. For the edge $vy$, choose its voltage so that the voltage in the direction from $v$ to $y$ is $\alpha(e)$, that is, 
    \[
    \beta(vy)=
    \begin{cases}
        \alpha(e), &\text{if the reference orientation of $vy$ is from $v$ to $y$},\\
        -\alpha(e), &\text{if the reference orientation of $vy$ is from $y$ to $v$}.
    \end{cases}
    \]
    For each $i \in \Z_\ell$, the lifted edge
$(e,i) = (x,i)(y,i+\alpha(e))$ of $G'(\alpha,\ell)$
is replaced in $G(\beta,\ell)$ by the path
$(x,i)(v,i)(y,i+\alpha(e))$.
Thus suppressing all lifts $(v,i)$ of $v$ yields the graph
 $G'(\alpha,\ell)$.

Now suppose that $x=y$. Then  $G'=G-v$.
Extend $\alpha$ to a voltage assignment
$\beta:E(G)\to\Z_\ell$ by assigning voltage $0$ to both
edges incident with $v$.
For each $i\in\Z_\ell$, the vertices $(v,i)$ and $(x,i)$
are joined by two parallel edges in $G(\beta,\ell)$.
Deleting all lifts $(v,i)$ of $v$ therefore yields the graph
 $G'(\alpha,\ell)$.
This is equivalent to suppressing all lifts of $v$.
\end{proof}

We next consider vertices of degree at least four. For such a vertex $v$ of degree $d$, by \emph{blowing up} $v$ into a $d$-cycle $C_v$, we mean replacing $v$ with $C_v$ and reattaching the edges formerly incident with $v$ bijectively to the vertices of $C_v$, leaving their other endpoints unchanged. One common adjustment to the blowing-up process is to blow up a vertex to a path $P_v$ instead of a cycle $C_v$; the following lemma formalizes this for our purposes.

\begin{lemma}\label{lem:tree-bridgeless}
Let $G$ be a bridgeless graph and let $v$ be a vertex of
degree $d \ge 4$. There exists an ordering
$e_0,e_1,\dots,e_{d-1}$ of the edges incident with  $v$
such that the graph $G_v$ obtained by replacing $v$ with
a path $P_v=x_1x_2\cdots x_{d-2}$ and reattaching
\begin{itemize}
\setlength\itemsep{-0.15cm}
    \item $e_0$ and $e_1$ to $x_1$;
    \item $e_i$ to $x_i$ for each $2 \le i \le d-3$;
    \item $e_{d-2}$ and $e_{d-1}$ to $x_{d-2}$,
\end{itemize}
while leaving the other endpoints unchanged, is bridgeless.
Moreover, every vertex of $P_v$ has degree $3$ in $G_v$.
\end{lemma}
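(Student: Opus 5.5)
The degree statement is immediate. Since $d\ge 4$, the path $P_v$ has $d-2\ge 2$ vertices, so $x_1\neq x_{d-2}$. In $G_v$ the vertex $x_1$ is incident with $e_0,e_1$ and the path edge $x_1x_2$. Each internal vertex $x_i$ ($2\le i\le d-3$) is incident with $e_i$ and two path edges. Finally, $x_{d-2}$ is incident with $e_{d-2},e_{d-1}$ and $x_{d-3}x_{d-2}$. So all vertices of $P_v$ have degree $3$ for any ordering, and the work lies in choosing the ordering so that $G_v$ is bridgeless.

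The plan is to classify the possible bridges of $G_v$ by edge cuts $\delta_{G_v}(X)$ with $|\delta_{G_v}(X)|=1$.

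\emph{Cuts not separating $P_v$.} Suppose $V(P_v)$ lies entirely on one side of $X$. Contracting the tree $P_v$ back to $v$ turns $\delta_{G_v}(X)$ into an edge cut of $G$ of the same size. A cut of size $1$ would then give a bridge of $G$, which is impossible.

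\emph{Cuts separating $P_v$.} If $X$ separates $P_v$, then at least one path edge crosses. So a single-edge cut must consist of exactly one path edge $x_jx_{j+1}$ with $1\le j\le d-3$, and no other edge crosses. In that case $X\setminus V(P_v)$ is a union of components of $G-v$. Moreover, these components receive exactly the ends of the edges attached to $x_1,\dots,x_j$, namely the prefix $\{e_0,e_1,\dots,e_j\}$.

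Partition the edges at $v$ into classes according to which component of $G-v$ contains their other endpoint; parallel edges fall into the same class. Every class has size at least $2$: if a class were a single edge, that edge would be a bridge of $G$. By the discussion above, $G_v$ has a bridge if and only if some prefix $\{e_0,\dots,e_j\}$ with $1\le j\le d-3$ is a union of classes.

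It remains to choose an ordering with no such prefix. A prefix of this kind has size between $2$ and $d-2$, so it is a proper nonempty subset of the edges at $v$.

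\emph{One class.} No proper nonempty subset is a union of classes, so any ordering works.

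\emph{At least two classes.} Pick distinct classes $A$ and $B$. Since $|A|\ge 2$, we can set $e_0\in A$, $e_{d-1}\in A$ and $e_1\in B$, and order the remaining edges arbitrarily. Every prefix $\{e_0,\dots,e_j\}$ with $j\le d-3$ contains $e_0\in A$ but not $e_{d-1}\in A$. It therefore contains part but not all of $A$, so it is not a union of classes. Hence $G_v$ is bridgeless.

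The only delicate point is the cut correspondence. One must check that a one-edge cut through $x_jx_{j+1}$ forces $X\setminus V(P_v)$ to be a union of components of $G-v$, and that contraction preserves cut sizes in the non-separating case. These are routine, and they are where the care in the argument lies.
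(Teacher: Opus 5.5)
Your proof is correct, but it takes a different route from the paper. You argue with cuts, while the paper argues with cycles. The paper picks a cycle $C$ through $v$ and lets $e_0,e_{d-1}$ be the two edges of $C$ at $v$. It then blows $v$ up into a $d$-cycle $x_0x_1\cdots x_{d-1}x_0$ and deletes $x_0x_{d-1}$. Next it checks that every edge of the resulting graph lies on a cycle: $C-v$ plus the path $x_0x_1\cdots x_{d-1}$ covers the path edges, and every other cycle of $G$ through $v$ is rerouted along a subpath. Suppressing $x_0$ and $x_{d-1}$ then yields $G_v$.

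The key choice is the same as yours. Two edges at $v$ lie on a common cycle through $v$ exactly when their far ends are in the same component of $G-v$, i.e.\ in the same class. Your cut and contraction analysis gives more: an exact criterion that $G_v$ is bridgeless if and only if no prefix $\{e_0,\dots,e_j\}$ with $1\le j\le d-3$ is a union of classes. That tells you precisely which orderings work. The paper's argument is shorter and exhibits the cycles explicitly, and it reuses the suppression step that appears elsewhere in the paper.

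Two small remarks. First, the requirement $e_1\in B$ is never used: your final argument only needs $e_0,e_{d-1}\in A$, so the case split on the number of classes is unnecessary. Second, in the separating case you should say explicitly that, after replacing $X$ by its complement if needed, $X\cap V(P_v)=\{x_1,\dots,x_j\}$.
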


\begin{proof} 
Since $G$ is bridgeless, it contains a cycle $C$ through $v$. Order the edges incident with $v$ as $e_0,e_1,\dots,e_{d-1}$ so that $e_0$ and $e_{d-1}$ are the two edges of $C$ incident with $v$.

Blow up $v$ into the $d$-cycle $C_v=x_0x_1\cdots x_{d-1}x_0$, reattaching $e_i$ to $x_i$ for each $0\le i\le d-1$ and leaving its other endpoint unchanged.
Denote the resulting graph by $G'_v$, and set $H=G'_v-x_0x_{d-1}$.
We show that $H$ is bridgeless by verifying that every edge of $H$ lies on a cycle.

Note that $C-\{v\}$ together with the path
$x_0x_1\cdots x_{d-1}$ yields a cycle in $H$ containing
every edge of this path.

Now let $e\in E(H)\setminus E(C_v)$.
The corresponding edge of $G$ lies on a cycle $C_e$.
If $C_e$ avoids $v$, it remains a cycle in $H$.
Otherwise, let $e_i$ and $e_j$, with $i<j$, be the two
edges of $C_e$ incident with $v$.
Then  $C_e -\{v\}$ together with the path
$x_ix_{i+1}\cdots x_j$ yields a cycle in $H$ containing $e$.

Thus every edge of $H$ lies on a cycle, so $H$ is bridgeless.

Finally, suppress the $2$-vertices $x_0$ and $x_{d-1}$
in $H$. The resulting graph is precisely $G_v$:
the path replacing $v$ is $P_v=x_1x_2\cdots x_{d-2}$,
with $e_0,e_1$ attached to $x_1$ and
$e_{d-2},e_{d-1}$ attached to $x_{d-2}$.
Since suppression preserves bridgelessness, $G_v$ is
bridgeless, as required.
\end{proof}


We recall the notion of switching equivalence for voltage assignments; see, for example, Liu, Peyerimhoff, and Vdovina~\cite{Liu2020}. 
For a function $s: V(G) \to \Z_\ell$, the \emph{switching} of a $\Z_\ell$-voltage assignment $\alpha$ by $s$ is the voltage assignment $\alpha_s$ defined by
 $$\alpha_s(e) = \alpha(e) + s(h(e)) - s(t(e)) ~\text{in $\Z_\ell$}, \qquad\text{for every } e\in E(G). $$
  Two voltage assignments $\alpha$ and $\beta$ are \emph{switching equivalent} if $\beta = \alpha_s$ for some function $s: V(G) \to \Z_\ell$. Switching-equivalent voltage assignments induce isomorphic cyclic covers~\cite{grosstucker1987}. 

Let $C = v_1v_2\dots v_mv_1$ be a cycle in $G$ and let $e_j = v_{j}v_{j+1}$, where $v_{m+1}=v_1$. The \emph{total voltage of $C$ under $\alpha$} is
$$\alpha(C) := \sum_{j=1}^{m}\epsilon_j\alpha(e_j),$$
where $\epsilon_j = 1$ if the reference orientation of $e_j$ is from $v_j$ to $v_{j+1}$, and $\epsilon_j = -1$ otherwise.

The following switching lemma is stated in terms of voltage assignments and is a special case of a result of Zaslavsky~\cite[Lemma~5.3]{ZASLAVSKY198932}. When $K$ is a spanning tree of $G$, it specializes to the spanning-tree version that is widely used in topological graph theory; see Gross and Tucker~\cite[Section~2.5.2]{grosstucker1987}. We include a proof for completeness.

\begin{lemma}\label{lem:switching-subgraph}
Let $G$ be a graph, let $\alpha:E(G) \to \Z_\ell$ be a voltage assignment, and let $K$ be a subgraph of $G$. There exists a voltage assignment $\alpha'$ switching-equivalent to $\alpha$ such that $\alpha'(e) = 0$ for every $e \in E(K)$ if and only if every cycle of $K$ has total voltage $0$ under $\alpha$. 
\end{lemma}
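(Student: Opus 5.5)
The plan is to prove the two directions separately, using one basic invariance fact.

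\emph{Invariance.} Switching does not change the total voltage of any cycle. Take a cycle $C=v_1v_2\cdots v_mv_1$ and traverse it in the direction $v_1\to v_2\to\cdots$. For each edge $e_j$, the signed switched voltage is $\epsilon_j\alpha_s(e_j)=\epsilon_j\alpha(e_j)+s(v_{j+1})-s(v_j)$. This holds whichever way $e_j$ is referenced, because reversing the orientation negates both the voltage and the difference $s(h)-s(t)$. Summing over $j$, the $s$-terms telescope to $0$, so $\alpha_s(C)=\alpha(C)$.

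\emph{Necessity.} Suppose $\alpha'=\alpha_s$ vanishes on $E(K)$. Then every cycle $C$ of $K$ satisfies $\alpha(C)=\alpha'(C)=0$.

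\emph{Sufficiency.} Assume every cycle of $K$ has total voltage $0$. We build $s$ one component of $K$ at a time. Vertices of $G$ outside $K$, and isolated vertices of $K$, get $s=0$.

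Fix a component $K_1$ of $K$ and a root $r\in V(K_1)$, and set $s(r)=0$. For any other $u\in V(K_1)$, choose a walk $W$ in $K_1$ from $r$ to $u$. Let $\alpha(W)$ be the signed sum of voltages along $W$, using sign $+1$ when an edge is traversed along its reference orientation and $-1$ otherwise. Define $s(u):=-\alpha(W)$.

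The main point is that $s(u)$ does not depend on the choice of $W$. This is equivalent to saying that every closed walk in $K_1$ has signed voltage $0$. A closed walk decomposes, as a signed edge multiset, into cycles of $K$ together with edges traversed once in each direction. More precisely, if $\alpha(W)\ne 0$ for some closed walk, take a shortest such walk. If it repeats a vertex, split it there into two shorter closed walks. At least one of them has nonzero voltage, which contradicts minimality unless the piece is trivial. A closed walk with no repeated vertex is either a backtrack $u\,v\,u$ along a single edge, whose voltage is $0$, or a cycle of $K$, whose voltage is $0$ by hypothesis. For parallel edges, a $2$-cycle counts as a cycle of $K$, so it is also covered by the hypothesis. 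This contradiction shows every closed walk has voltage $0$, so $s$ is well defined.

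\emph{Conclusion.} For an edge $e\in E(K_1)$ with tail $t=t(e)$ and head $h=h(e)$, let $W$ be a walk from $r$ to $t$. Then $W$ followed by $e$ is a walk from $r$ to $h$, so well-definedness gives $s(h)=s(t)-\alpha(e)$. Hence $\alpha_s(e)=\alpha(e)+s(h)-s(t)=0$. Since the components of $K$ are vertex-disjoint, the definitions of $s$ on different components do not conflict. Therefore $\alpha_s$ vanishes on all of $E(K)$.

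The only delicate step is the well-definedness argument, namely reducing closed walks to cycles. An equivalent route is to choose a spanning forest $T$ of $K$, define $s$ along $T$, and then observe that each non-tree edge $e$ of $K$ closes a fundamental cycle. That cycle has total voltage $0$, so $\alpha_s(e)$ equals its total voltage under $\alpha_s$, since all its tree edges have switched voltage $0$. By invariance this is $0$.
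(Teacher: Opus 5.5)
Your proof is correct. The necessity direction, via telescoping invariance of total voltage under switching, is the same as the paper's.

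For sufficiency you take a different route. You define the switching function directly as a potential, $s(u)=-\alpha(W)$ for a walk $W$ in $K$ from a fixed root of the component. The real work is then path-independence. You get it from a shortest-counterexample argument that reduces every closed walk in $K$ to backtracks and cycles of $K$, and you correctly include the $2$-cycles that come from parallel edges.

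The paper splits the argument into two stages. It first handles the acyclic case by induction on $|E(K)|$, switching at a leaf to zero out one edge at a time. For general $K$, it fixes a spanning forest $F$ and switches so that the voltage vanishes on $F$. It then shows each non-forest edge $e$ has voltage $0$ by applying invariance to the fundamental cycle in $F+e$.

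The two approaches trade off differently:
\begin{itemize}
\item Your construction gives an explicit formula for $s$. It also verifies $\alpha_s(e)=0$ on every edge of $K$ at once, without invoking invariance again. The cost is the extra closed-walk decomposition step.
\item The paper's argument only ever examines fundamental cycles, so it never has to reason about arbitrary closed walks.
\end{itemize}
The alternative you sketch at the end, using a spanning forest and fundamental cycles, is essentially the paper's proof.
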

\begin{proof}
We first show that the total voltage of a cycle is invariant
under switching. Let $s:V(G)\to\Z_\ell$, and let
$C=v_1v_2\cdots v_mv_1$ be a cycle with edges
$e_j=v_jv_{j+1}$, where $v_{m+1}=v_1$. Then
\[
\alpha_s(C) = \sum_{j=1}^{m}
  \bigl(\epsilon_j\alpha(e_j)+s(v_{j+1})-s(v_j)\bigr) = \alpha(C)+\sum_{j=1}^{m}\bigl(s(v_{j+1})-s(v_j)\bigr) = \alpha(C).
\]
The necessity follows immediately.

For the sufficiency, suppose that every cycle of $K$
has total voltage $0$ under $\alpha$.
We first show that, when $K$ is acyclic, $\alpha$ is
switching equivalent to a voltage assignment that vanishes
on $E(K)$. We proceed by induction on $|E(K)|$.

If $E(K)=\emptyset$, there is nothing to prove.
Otherwise, choose a leaf $y$ in a nontrivial component
of $K$, let $x$ be its unique neighbor in $K$, and set
$e=xy$ and $K'=K-y$.
By the induction hypothesis, $\alpha$ is switching
equivalent to a voltage assignment $\beta$ that vanishes
on $E(K')$.

Define $s:V(G)\to\Z_\ell$ by setting $s(z)=0$ for
every $z\ne y$ and
\[
s(y)=
\begin{cases}
-\beta(e), & \text{if $e$ is oriented from $x$ to $y$},\\
 \beta(e), & \text{if $e$ is oriented from $y$ to $x$}.
\end{cases}
\]
Then $\alpha':=\beta_s$ satisfies $\alpha'(e)=0$.
Since no edge of $K'$ is incident with $y$, this switching
leaves the voltages on $E(K')$ unchanged.
Thus $\alpha'$ vanishes on $E(K)$, completing the induction.

Now let $K$ be arbitrary, and choose a spanning forest $F$
of $K$. By the acyclic case, $\alpha$ is switching equivalent
to a voltage assignment $\alpha'$ that vanishes on $E(F)$.
For each $e\in E(K)\setminus E(F)$, let $C_e$ be the unique
cycle in $F+e$. By switching invariance and our assumption,
\[
\alpha'(C_e)=\alpha(C_e)=0.
\]
All edges of $C_e$ other than $e$ belong to $F$ and hence
have voltage $0$ under $\alpha'$. It follows that
$\alpha'(e)=0$. Therefore $\alpha'$ vanishes on $E(K)$,
as required.
\end{proof}

\subsection{Proof of Theorem~\ref{thm:t-3-4}}

We now combine the preceding results to prove
Theorem~\ref{thm:t-3-4}.

\begin{proof}
We may assume that $G$ is connected, since otherwise we
can treat its components separately.
If $G$ is Eulerian, then every vertex in any $\ell$-fold
cyclic cover of $G$ has even degree, so every such cyclic cover
admits a nowhere-zero $2$-flow.
Hence we may assume that $G$ is not Eulerian.
By Observation~\ref{obs:suppression}, we may also assume
that $G$ has minimum degree at least $3$.

Applying Lemma~\ref{lem:tree-bridgeless} successively,
replace each vertex $x\in V(G)$ of degree at least four with a path $T_x$ and reattach the edges incident with $x$ as prescribed by
the lemma. For each $3$-vertex, let $T_x = \{x\}$. Denote the resulting bridgeless cubic graph
by $H$. Then
\[
K=\bigcup_{x\in V(G)}T_x
\]
is a spanning forest of $H$, and contracting each $T_x$
to a single vertex $x$ recovers $G$.
Give each edge of $H$ corresponding to an edge of $G$
the inherited reference orientation, and orient each edge of  $T_x$
so that  $T_x$ is a directed path.

Set $k=3$ if $\ell$ is even and $k=4$ if $\ell$ is odd.
By Theorems~\ref{thm:t-3-cubic} and~\ref{thm:t-4-cubic},
there exists a $\Z_\ell$-voltage assignment
$\beta:E(H)\to\Z_\ell$ such that
$H(\beta,\ell)$ admits a nowhere-zero $k$-flow.
Since $K$ is acyclic, Lemma~\ref{lem:switching-subgraph}
allows us to replace $\beta$ with a switching-equivalent
voltage assignment that vanishes on $E(K)$.
Switching preserves the isomorphism type of the cyclic
cover, so we may assume that $\beta(e)=0$ for every
$e\in E(K)$.

For each $x\in V(G)$, the lift of $T_x$ in
$H(\beta,\ell)$ consists of $\ell$ disjoint
copies $T_x^i$, indexed by $i\in\Z_\ell$, with
\[
V(T_x^i)=V(T_x)\times\{i\}.
\]
Contract each $T_x^i$ to a single vertex $(x,i)$.
We show that the resulting graph is an $\ell$-fold
cyclic cover of $G$.

For each edge $f\in E(G)$, let $f^*$ be the corresponding
edge of $H$, and define $\alpha(f)=\beta(f^*)$.
Suppose that $f$ is oriented from $x$ to $y$, and write
$a=\alpha(f)$. For each $i\in\Z_\ell$, the lifted edge
$(f^*,i)$ joins a vertex of $T_x^i$ and a vertex of
$T_y^{i+a}$. After contraction, it joins $(x,i)$ and
$(y,i+a)$, exactly as prescribed by $\alpha$.
Thus the resulting graph is $G(\alpha,\ell)$.

Since $G(\alpha,\ell)$ can be obtained by contracting some edges of $H(\beta,\ell)$ and $H(\beta,\ell)$ admits a nowhere-zero $k$-flow, $G(\alpha,\ell)$ admits a nowhere-zero $k$-flow. This proves the theorem.
\end{proof}

\subsection{An application to Cartesian bundles}

We recall the definition of Cartesian bundles following Rollov\'{a} and \v{S}koviera~\cite{ROLLOVA2012}. Let $G$ and $H$ be graphs, and fix an orientation of $G$. For each edge $e = uv \in E(G)$ oriented from $u$ to $v$, choose an automorphism $\varphi_{e} \in \operatorname{Aut}(H)$. The corresponding \emph{Cartesian bundle} $G\square^{\varphi} H$, with \emph{base} $G$ and \emph{fibre} $H$, has vertex set
$$V(G\square^{\varphi} H) = V(G) \times V(H).$$
For each $u \in V(G)$ and each edge $f = xy \in E(H)$, there is a corresponding edge joining $(u,x)$ and $(u,y)$. Moreover, for each edge $e = uv \in E(G)$ oriented from $u$ to $v$ and each $x \in V(H)$, there is an edge joining $(u,x)$ and $(v, \varphi_e(x))$. Rollov\'{a} and \v{S}koviera proved the following.

\begin{theorem}[Rollov\'{a} and \v{S}koviera~\cite{ROLLOVA2012}]\label{thm:RS}
    Every Cartesian bundle of two graphs without isolated vertices has a nowhere-zero $4$-flow. 
\end{theorem}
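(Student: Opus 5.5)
The plan is to follow the classical flow-theoretic route for Cartesian bundles rather than anything specific to cyclic covers. Let $B=G\square^{\varphi}H$ with $G$ and $H$ without isolated vertices. The edges of $B$ split into two classes. The \emph{fibre edges} form, over each $u\in V(G)$, a copy $H_u$ of $H$. The \emph{base edges} form, for each $e=uv\in E(G)$, a perfect matching $M_e$ between $V(H_u)$ and $V(H_v)$ given by $\varphi_e$. By the standard product theorem for flows, it suffices to find two integer flows $\phi_1,\phi_2$ with values in $\{-1,0,1\}$ whose supports together cover $E(B)$. Equivalently, we want two even subgraphs (cycle spaces over $\Z_2$) whose union is $E(B)$; then $\phi_1+2\phi_2$ is a nowhere-zero $4$-flow. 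This is Jaeger's $\Z_2\times\Z_2$ criterion, and I would use that formulation.

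First I cover the base edges. Fix an edge $f=xy\in E(H)$, which exists at every point since $H$ has no isolated vertices. For each $e=uv\in E(G)$ and each $x\in V(H)$, the base edge $(u,x)(v,\varphi_e(x))$ together with the fibre edges of a suitable edge at each endpoint forms a $4$-cycle. More concretely, I would take a spanning collection of ``ladders'' in $B$. Choose an edge $f_x$ of $H$ at every vertex $x$. For each base edge $\tilde e=(u,x)(v,x')$, with $x'=\varphi_e(x)$, pair it with the base edge $(u,y)(v,\varphi_e(y))$, where $y$ is the other end of $f_x$. Since $\varphi_e$ is an automorphism, $\varphi_e(x)\varphi_e(y)$ is an edge of $H$, so these four edges form a $4$-cycle. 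Taking symmetric differences of such $4$-cycles gives even subgraphs covering every base edge.

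Second I cover the fibre edges. Each $H_u$ is a copy of $H$, which may have bridges, so $H_u$ alone need not be coverable. However, every fibre edge $(u,x)(u,y)$ lies in the $4$-cycle $(u,x)(u,y)(v,\varphi_e(y))(v,\varphi_e(x))$ for any edge $e=uv$ at $u$, and such an $e$ exists since $G$ has no isolated vertices. Thus $B$ is covered by $4$-cycles of this ``square'' type.

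The main obstacle is arranging that two even subgraphs suffice rather than many. My first attempt is to $2$-colour the squares. Pick a spanning forest-like structure: for each $u$ choose one base edge $e_u$ at $u$. Let $J_1$ be the union of the fibres $H_u$ over a set of vertices, plus their partner fibres, realized as the symmetric difference of all squares over $e_u$ and the edges of a fixed perfect-matching-like decomposition of $H$. If this fails in general, I would instead reduce to the known case: $B$ contains, as a spanning subgraph, the Cartesian bundle of $G$ with a spanning forest $F$ of $H$ that has no isolated vertices. Then I would show that every graph obtained from a bridgeless graph with a nowhere-zero $4$-flow by adding edges that lie in $4$-cycles already covered still has a $4$-flow, using $\Z_2\times\Z_2$ adjustments along those squares. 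That adjustment step is where I expect the real work.
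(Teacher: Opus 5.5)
The paper quotes this theorem from Rollov\'{a} and \v{S}koviera without proof, so your proposal has to stand on its own. It does not: the decisive step is missing.

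What you actually establish is that every edge of $B=G\square^{\varphi}H$ lies on a square $Q(e,f)$ with vertices $(u,x),(u,y),(v,\varphi_e(x)),(v,\varphi_e(y))$, for $e=uv\in E(G)$ and $f=xy\in E(H)$. That only shows $B$ is bridgeless, which via Seymour gives a $6$-flow. Covering each edge by some even subgraph is trivial, and taking symmetric differences of squares gives no bound on how many even subgraphs you need. The entire content of the theorem is that two suffice, and you explicitly leave this open.

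The ``$2$-colouring of squares'' is not an argument: squares overlap heavily, the fibres $H_u$ are not even subgraphs, and $H$ has no perfect-matching-like decomposition in general. The fallback fails at both of its steps.
\begin{itemize}
\item Keeping only the $F$-edges in each fibre does not give a Cartesian bundle with fibre $F$ unless every $\varphi_e$ preserves $F$, and no compatible choice of forests need exist. For example, take $H=K_3$ over a cycle whose monodromy is a rotation of order $3$: no spanning tree of $K_3$ is invariant. Even for products, $G\square F$ is not a known case; it is just another instance of the same theorem.
\item The ``adjustment lemma'' is unproved, and the local $\Z_2\times\Z_2$ move you describe breaks down. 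If the other three edges of the square carry the three distinct nonzero elements, every value you push around the square creates a zero. Around a triangle at most three of the four group elements are forbidden, which is why the analogous lemma is easy there; around a square all four can be.
\end{itemize}
So a global argument is needed, and you have not given one.

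Here is a way to make the bundle structure do real work. It suffices to find an even subgraph $C_1$ of $B$ such that every component of $(V(B),C_1)$, isolated vertices included, contains an even number of odd-degree vertices of $B$. Then $E(B)\setminus C_1$ is even in $B/C_1$, so it extends to an even subgraph $C_2$ of $B$, and $C_1\cup C_2=E(B)$. Because $\varphi_e$ preserves degrees, the degrees in $B$ of the four vertices of $Q(e,f)$ sum to $2\bigl(d_G(u)+d_G(v)+d_H(x)+d_H(y)\bigr)$. Hence every square contains an even number of odd vertices of $B$.

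Consequently any spanning family of vertex-disjoint squares finishes the proof. Such a family exists, for instance, when $G$ and $H$ both have perfect matchings: over each edge $e=uv$ of a perfect matching of $G$, oriented from $u$ to $v$, take the squares $Q(e,f)$ for $f$ in a perfect matching of $H$. In general you must work with spanning star forests of $G$ and $H$, whose pieces need not contain an even number of odd vertices of $B$. Pairing those odd vertices up through edges of $B$ between pieces is exactly the global step your proposal would have to supply.
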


As an application of Theorem~\ref{thm:t-3-4}, we obtain the following result.

\begin{theorem} \label{thm:cartesian-bundle}
Let $G$ be a bridgeless graph without isolated vertices, and let $H$ be a graph. There exists a Cartesian bundle with base $G$ and fibre $H$ that admits a nowhere-zero $4$-flow, unless both of the following conditions are met:

\vspace{-0.15cm}
\begin{itemize}
\setlength\itemsep{-0.15cm}
\item $H$ has exactly one isolated vertex,

\item  $G$ admits no nowhere-zero $4$-flow.
\end{itemize}
\end{theorem}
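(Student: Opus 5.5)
We split into cases according to the isolated vertices of $H$. If $H$ has no isolated vertices (including the case $E(H)\neq\emptyset$ with every vertex of positive degree), Theorem~\ref{thm:RS} applies directly: every Cartesian bundle with base $G$ and fibre $H$ admits a nowhere-zero $4$-flow, since $G$ has no isolated vertices. If $G$ itself admits a nowhere-zero $4$-flow, we take all $\varphi_e$ to be the identity, so the bundle is $G\square H$. For each vertex $x$ of $H$, the edges joining $(u,x)$ and $(v,x)$ form a copy $G_x$ of $G$. We put the $4$-flow of $G$ on each copy $G_x$. On the fibre edges we put a nowhere-zero $4$-flow of $H$ if $H$ has one; in general we instead take a nowhere-zero $4$-flow on each copy of $H$ obtained as a Cartesian product piece. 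Since the sum of flows on edge-disjoint subgraphs is a flow, the main point is to cover the fibre edges.

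A cleaner route for the fibre edges is to write $H=H_0\cup I$, where $I$ is the set of isolated vertices and $H_0$ has no isolated vertices. Then the Cartesian bundle decomposes into the bundle with fibre $H_0$ together with the copies $G_x$ for $x\in I$. These pieces are vertex-disjoint as long as the chosen automorphisms preserve $I$, which every automorphism does. The bundle with fibre $H_0$ (if $H_0$ is nonempty) is handled by Theorem~\ref{thm:RS}, whatever automorphisms we choose on $H_0$. So the whole problem reduces to the subgraph induced on $V(G)\times I$, which is a Cartesian bundle of $G$ with the edgeless fibre $\overline{K_m}$, $m=|I|$. We choose the automorphisms on $H_0$ and on $I$ independently (any automorphism of $H$ that fixes $H_0$ pointwise and permutes $I$ is allowed).

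With $m=|I|$, a bundle of $G$ with fibre $\overline{K_m}$ whose automorphisms are the cyclic shifts $x_i\mapsto x_{i+\alpha(e)}$ is exactly the cyclic cover $G(\alpha,m)$. If $m=0$ there is nothing to do. If $m\ge 2$, Theorem~\ref{thm:t-3-4} gives a voltage assignment $\alpha$ with $G(\alpha,m)$ admitting a nowhere-zero $4$-flow (a $3$-flow is in particular a $4$-flow), so we take these shifts on $I$. If $m=1$, the only possible piece is a single copy of $G$, and we need $G$ itself to admit a nowhere-zero $4$-flow. This is precisely the excluded situation, so outside it the condition holds.

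Combining the flows on the vertex-disjoint pieces yields a nowhere-zero $4$-flow of the whole bundle. The only step needing care is checking that a bundle automorphism may act on $I$ by an arbitrary cyclic permutation while acting on $H_0$ by the identity; this is a valid automorphism of $H$ because the vertices of $I$ are isolated. Beyond that, the argument is a direct assembly of Theorem~\ref{thm:RS} and Theorem~\ref{thm:t-3-4}.
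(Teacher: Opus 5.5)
Your final argument (the ``cleaner route'') is correct and is essentially the paper's proof: you split the bundle into the vertex-disjoint parts over $V(H_0)$ and over $I$, handle the first by Theorem~\ref{thm:RS}, and on the second use automorphisms that fix $H_0$ pointwise and cyclically shift $I$, turning it into the cover $G(\alpha,|I|)$ supplied by Theorem~\ref{thm:t-3-4}; the case $|I|=1$ then needs a nowhere-zero $4$-flow of $G$ itself. Drop the sketch in your first paragraph about placing a nowhere-zero $4$-flow on each copy of $H$: it is unjustified, since $H$ may have bridges (e.g.\ $H=K_2$), and your final argument does not use it.
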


\vspace{-0.3cm}
\begin{proof}
Let $I$ be the set of isolated vertices of $H$, let $r:= |I|$, and set $H_0 := H-I$.  

Assume that $r \not = 1$ or $G$ admits a nowhere-zero $4$-flow. We will show that there is a Cartesian bundle with base $G$ and fibre $H$ that admits a nowhere-zero $4$-flow.

We first show that we may assume that $H_0$ is an empty set, and hence $r\geq 1$. In other words, $H$ consists entirely of isolated vertices.

Consider any Cartesian bundle $\Gamma := G \square^\psi H$. Let $\Gamma_I$ and $\Gamma_0$ be the subgraphs of $\Gamma$ induced by $V(G)\times I$ and $V(G)\times V(H_0)$, respectively. Since $I$ consists of isolated vertices of $H$ and is preserved by every automorphism of $H$, no edge of $\Gamma$ joins $\Gamma_I$ and $\Gamma_0$. Thus $$\Gamma = \Gamma_I \dot\cup \Gamma_0.$$
If $H_0$ is nonempty, then $\Gamma_0$ is a Cartesian bundle with base $G$ and fibre $H_0$, and hence admits a nowhere-zero $4$-flow by Theorem~\ref{thm:RS}. Hence $\Gamma$ admits a nowhere-zero $4$-flow if and only if $\Gamma_I$ does. Since every automorphism of $H[I]$ extends to an automorphism of $H$ by fixing $H_0$ pointwise, it suffices to consider the case $H_0 = \emptyset$ and $r \ge 1$.

Suppose that $r = 1$. Then $G$ admits a nowhere-zero $4$-flow. Since $H$ consists of a single vertex, every Cartesian bundle with base $G$ and fibre $H$ is isomorphic to $G$. The result follows.

Now suppose that $r \ge 2$. We construct a Cartesian bundle with base $G$ and fibre $H$ that admits a nowhere-zero $4$-flow. Write $I = \{z_i \mid i \in \Z_r\}$. By Theorem~\ref{thm:t-3-4}, there exists a voltage assignment $\alpha: E(G) \to \Z_r$ such that the derived cyclic cover $G(\alpha,r)$ admits a nowhere-zero $4$-flow. 
For each edge $e = uv \in E(G)$ oriented from $u$ to $v$, define 
\[
\varphi_e(z_i)= z_{i+\alpha(e)} \quad \text{for each } i \in \Z_r.
\]
Since $H$ consists of isolated vertices, we have $\varphi_e \in \operatorname{Aut}(H)$. Let $\Gamma:=G\square^\varphi H$ be the Cartesian bundle determined by these automorphisms. Then, $\Gamma$ is isomorphic to $G(\alpha,r)$. Indeed, for each edge $e = uv \in E(G)$ oriented from $u$ to $v$ and each $i \in \Z_r$, the corresponding edge of $\Gamma$ is $(u,z_i)(v, \varphi_e(z_i)) = (u,z_i)(v, z_{i+\alpha(e)})$, which corresponds precisely to the edge $(e,i) = (u,i)(v,i+\alpha(e))$ of $G(\alpha, r)$. 
Hence $\Gamma \cong G(\alpha,r)$ and therefore admits a nowhere-zero $4$-flow. 
\end{proof}

\section{Concluding remarks}
\label{sec:concluding}
The results in this paper show a clear distinction between even and odd values of $\ell$. For even $\ell$, every bridgeless graph has an $\ell$-fold cyclic cover admitting a nowhere-zero $3$-flow, while for odd $\ell$, this need not be the case. This leads us to the following question.

\begin{problem}
\label{q1}
    For a fixed odd integer $\ell \ge 3$, characterize the bridgeless graphs $G$ for which there exists an $\ell$-fold cyclic cover admitting a nowhere-zero $3$-flow.
\end{problem}

 If every vertex of $G$ has even degree, then every $\ell$-fold cyclic cover
of $G$ admits a nowhere-zero $2$-flow, regardless of the parity of $\ell$. For bridgeless subcubic graphs, Theorems~\ref{thm:regular-l-2} and \ref{thm:t-3-cubic} together with Observation~\ref{obs:suppression} give a complete answer, summarized below. Recall that a graph is \emph{subcubic} if its
maximum degree is at most $3$.

\begin{theorem} 
    Let $\ell \ge 3$ be odd and let $G$ be a bridgeless subcubic graph. The following statements are equivalent:
    \vspace{-0.15cm}
    \begin{itemize}
    \setlength\itemsep{-0.15cm}
    \item[(a)] Some $\ell$-fold cyclic cover of $G$ admits a nowhere-zero $3$-flow;
    \item[(b)] Every $\ell$-fold cyclic cover of $G$ admits a nowhere-zero $3$-flow;
    \item[(c)] $G$ admits a nowhere-zero $3$-flow.
    \end{itemize}
\end{theorem}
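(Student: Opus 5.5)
The plan is to prove (c)$\Rightarrow$(b)$\Rightarrow$(a)$\Rightarrow$(c). The implication (b)$\Rightarrow$(a) is trivial. We may assume $G$ is connected, since each component can be treated separately. Every vertex of $G$ has degree $2$ or $3$: degree $0$ is excluded by connectivity unless $G$ is a single vertex, which is trivial, and degree $1$ would give a bridge. The idea is to reduce everything to the cubic case by suppressing $2$-vertices and then to invoke Theorem~\ref{thm:t-3-cubic} and Theorem~\ref{thm:regular-l-2}(1).

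First I handle the degenerate case. If $G$ has no $3$-vertex, then $G$ is a cycle and hence Eulerian. Every cyclic cover then has all degrees even, so (a), (b) and (c) all hold. So assume $G$ has a $3$-vertex. Repeatedly suppressing $2$-vertices gives a bridgeless cubic graph $G'$, possibly with parallel edges. Here bridgeless means that every edge lies on a cycle, and this is preserved by suppression. Connectivity together with the presence of a $3$-vertex ensures that no component collapses into an isolated loop-like piece. Two standard facts are used:
\begin{itemize}
\item a nowhere-zero $k$-flow passes back and forth between a graph and its suppression, since the two edges at a $2$-vertex carry equal values;
\item $G$ admits a nowhere-zero $3$-flow if and only if $G'$ does, and by Lemma~\ref{lem:regular-bipartite} this happens if and only if $G'$ is bipartite.
\end{itemize}

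For (c)$\Rightarrow$(b), let $\widetilde G=G(\beta,\ell)$ be an arbitrary cover. The lifts of the $2$-vertices of $G$ are $2$-vertices of $\widetilde G$. Suppressing all of them yields a cyclic cover $G'(\alpha,\ell)$ of $G'$: the voltage of a suppressed path is the signed sum of the voltages along it, which is just the converse direction of Observation~\ref{obs:suppression}. Since $G'$ is bipartite, Theorem~\ref{thm:regular-l-2}(1) gives that $G'(\alpha,\ell)$ has a nowhere-zero $3$-flow. Subdividing back to $\widetilde G$ preserves this flow.

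For (a)$\Rightarrow$(c), suppose some $G(\beta,\ell)$ has a nowhere-zero $3$-flow. Suppressing the lifts of $2$-vertices as above gives a cover $G'(\alpha,\ell)$ that still has a nowhere-zero $3$-flow. Since $\ell$ is odd, Theorem~\ref{thm:t-3-cubic} forces $G'$ to be bipartite. Hence $G'$, and therefore $G$, admits a nowhere-zero $3$-flow.

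The main point needing care is the claim that suppressing the lifts of the $2$-vertices in an arbitrary cover gives exactly a cyclic cover of $G'$. A maximal thread of $2$-vertices between $3$-vertices $x$ and $y$, possibly with $x=y$, lifts to $\ell$ disjoint paths. The $i$-th of these runs from $(x,i)$ to $(y,i+s)$, where $s$ is the total voltage along the thread. Defining $\alpha$ on the corresponding edge of $G'$ as $s$ settles the claim. A thread with $x=y$ becomes a loop in $G'$. The paper excludes loops, so I would avoid this by noting that in a bridgeless subcubic graph such a thread would force the third edge at $x$ to be a bridge. Hence this case does not occur once a $3$-vertex exists.
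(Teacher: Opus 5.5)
Your proof is correct and follows the paper's route: suppress the $2$-vertices to get a bridgeless cubic graph $G'$, then use Theorem~\ref{thm:regular-l-2}(1) for (c)$\Rightarrow$(b) and the necessity part of Theorem~\ref{thm:t-3-cubic} for (a)$\Rightarrow$(c). You also rightly use the converse direction of Observation~\ref{obs:suppression}, namely that every cover of $G$ suppresses to a cover of $G'$ via signed voltage sums along threads, which is the direction these implications actually need, and you correctly exclude the $x=y$ thread case, which is exactly where that converse would break.
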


We conclude with two results on alternating double wheels,
illustrating obstructions to nowhere-zero $3$-flows in
odd-fold cyclic covers beyond the subcubic case.

For $n \ge 3$, define the \emph{alternating double wheel} $AW_{2n}$ by $$V(AW_{2n}) = \{x,y\}\cup \{a_i, b_i \mid i \in \Z_n\} ~~\text{and}~~E(AW_{2n}) = \{a_ib_i, \ b_ia_{i+1}, \ xa_i, \ yb_i \mid i \in \Z_n\}.$$ We call the cycle $a_0b_0a_1b_1\cdots a_{n-1}b_{n-1}a_0$ the {\em rim}, and the edges incident with $x$ or $y$ the {\em spokes}. Clearly, $AW_{2n}$ is simple and bridgeless.  See Fig. \ref{fig:alternating-double-wheels} for an illustration of $AW_{12}$ and $AW_{8}$.

\begin{figure}[H]
\centering
\begin{tikzpicture}[
    scale = 0.95,
    transform shape,
    vertex/.style={
        inner sep=1.5pt,
        font=\small
    },
    directed/.style={
        -{Stealth[length=2mm,width=1.4mm]},
        line width=0.6pt
    },
    edge/.style={
        line width=0.6pt
    }
]
\begin{scope}[xshift=0cm]
\foreach \i in {0,...,5}{
    \pgfmathsetmacro{\aangle}{105-60*\i}
    \pgfmathsetmacro{\bangle}{75-60*\i}

    \node[vertex] (a\i) at (\aangle:2.7cm) {$a_{\i}$};
    \node[vertex] (b\i) at (\bangle:2.7cm) {$b_{\i}$};
}

\node[vertex] (x) at (-0.65,0) {$x$};
\node[vertex] (y) at ( 0.65,0) {$y$};
\foreach \i in {0,...,5}{
    \pgfmathtruncatemacro{\j}{mod(\i+1,6)}
    \draw[edge] (b\i) -- (a\i);
    \draw[edge] (b\i) -- (a\j);
}
\foreach \i in {0,...,5}{
    \draw[edge] (x) -- (a\i);
}
\foreach \i in {0,...,5}{
    \draw[edge] (b\i) -- (y);
}
\node at (0,-3.35) {$AW_{12}$};
\end{scope}
\begin{scope}[xshift=7cm]
\foreach \i in {0,...,3}{
    \pgfmathsetmacro{\aangle}{110-90*\i}
    \pgfmathsetmacro{\bangle}{65-90*\i}
    \node[vertex] (c\i) at (\aangle:2.7cm) {$a_{\i}$};
    \node[vertex] (d\i) at (\bangle:2.7cm) {$b_{\i}$};
}
\node[vertex] (xx) at (-0.65,0) {$x$};
\node[vertex] (yy) at ( 0.65,0) {$y$};
\foreach \i in {0,...,3}{
    \pgfmathtruncatemacro{\j}{mod(\i+1,4)}
    \draw[edge] (c\i) -- (d\i);
    \draw[edge] (d\i) -- (c\j);
}
\foreach \i in {0,...,3}{
    \draw[edge] (xx) -- (c\i);
}
\foreach \i in {0,...,3}{
    \draw[edge] (d\i) -- (yy);
}
\node at (0,-3.35) {$AW_8$};
\end{scope}
\end{tikzpicture}
\caption{The alternating double wheels $AW_{12}$ and $AW_8$.}
\label{fig:alternating-double-wheels}
\end{figure}

\begin{theorem}
Let $n \ge 3$. For the alternating double wheel $AW_{2n}$, the following statements are equivalent:
\vspace{-0.15cm}
\begin{itemize}
\setlength\itemsep{-0.1cm}
    \item [(a)] $3\mid n$;
    \item [(b)] $AW_{2n}$ admits a nowhere-zero $3$-flow;
    \item [(c)] There exists a $3$-fold cyclic cover of $AW_{2n}$ admitting a nowhere-zero $3$-flow. 
\end{itemize}
\end{theorem}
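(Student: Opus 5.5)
The three conditions are linked by the cycle (a)$\Rightarrow$(b)$\Rightarrow$(c)$\Rightarrow$(a). Throughout I would use the standard equivalence between nowhere-zero $3$-flows and nowhere-zero $\Z_3$-flows. A graph admits a nowhere-zero $3$-flow if and only if it has an orientation in which $d^+(v)-d^-(v)\equiv 0 \pmod 3$ at every vertex, obtained by taking all flow values equal to $1$ modulo $3$. The key local fact is that at a $3$-vertex this forces all three edges to point out or all three to point in, so every $3$-vertex is a source or a sink.

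\emph{(a)$\Rightarrow$(b).} Orient every edge at each $a_i$ out of $a_i$ and every edge at each $b_i$ into $b_i$. This is consistent: the rim edges run from an $a$-vertex to a $b$-vertex, the spokes at $x$ point into $x$, and the spokes at $y$ point out of $y$. Each $a_i,b_i$ then has net degree $\pm 3$, while $x$ and $y$ have net degree $\pm n$. If $3\mid n$, this orientation is a $\Z_3$-flow and hence gives a nowhere-zero $3$-flow.

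\emph{(b)$\Rightarrow$(c).} Take the zero voltage assignment. The resulting $3$-fold cover is three disjoint copies of $AW_{2n}$, so it inherits the flow from (b).

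\emph{(c)$\Rightarrow$(a).} This is the main step. Let $\widetilde G=AW_{2n}(\alpha,3)$ carry a mod-$3$ orientation as above. Every lift of $a_i$ or $b_i$ is a source or a sink, and adjacent $3$-vertices have opposite types. Along any component of the lifted rim, the types therefore alternate, so all lifted $a$-vertices in one rim component share a type. Let $r$ be the total voltage of the rim.

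If $r\ne 0$ in $\Z_3$, the lifted rim is a single $6n$-cycle. All $3n$ lifted $a$-vertices then have the same type, so each lift of $x$ has all $n$ spokes pointing in or all pointing out. This forces $3\mid n$.

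If $r=0$, Lemma~\ref{lem:switching-subgraph} applied to $K$ equal to the rim lets me assume that $\alpha$ vanishes on the rim. The lifted rim then splits into three copies $R_k$ on $V\times\{k\}$, with $k\in\Z_3$. Let $\sigma_k\in\{\pm1\}$ be the common type of the $a$-lifts in $R_k$, and let $m_k$ be the number of indices $i$ with $\alpha(xa_i)=k$, where spokes are oriented from $x$, so that $m_0+m_1+m_2=n$. The lift $(x,j)$ is joined to exactly $m_k$ $a$-lifts in $R_{j+k}$. Its $\Z_3$-condition therefore reads
\[
\sum_{k\in\Z_3} m_k\,\sigma_{j+k}\equiv 0 \pmod 3 \qquad \text{for every } j\in\Z_3 .
\]

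If all $\sigma_k$ are equal, this gives $n\equiv 0 \pmod 3$ directly. Otherwise, up to a global sign, exactly one $\sigma$ is $-1$. As $j$ varies, the position $k^*$ with $\sigma_{j+k^*}=-1$ runs over all of $\Z_3$, and the condition becomes $n-2m_{k^*}\equiv 0$. Hence $m_0\equiv m_1\equiv m_2\equiv 2n \pmod 3$, and summing gives $n\equiv 6n\equiv 0 \pmod 3$.

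In every case $3\mid n$, which is (a). Only the lifts of $x$ are needed; the lifts of $y$ play no role. The obstacle I anticipate is purely bookkeeping: correctly identifying which copy $R_{j+k}$ each spoke lands in after the switching, and handling the orientation and sign conventions for the voltages.
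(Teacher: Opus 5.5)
Your proof is correct and follows the same route as the paper. The paper argues that rim lifts are sources or sinks alternating along each lifted rim cycle, that the single-cycle case forces every lift of $x$ to have net degree $\pm n$, and that the three-cycle case is settled by the mod-$3$ conditions at the lifts of $x$. The only difference is that the paper skips your switching normalization and per-lift bookkeeping by summing those conditions over the three lifts of $x$ (total indegree $2n$ versus total outdegree $n$). This is exactly what summing your displayed congruence over $j$ gives, since $\sum_j\sum_k m_k\sigma_{j+k}=n(\sigma_0+\sigma_1+\sigma_2)=\pm n$.
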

\begin{proof}
First show (a) implies (b).  Suppose that $3\mid n$. Orient the rim edges from a $b_i$ to an $a_j$, and orient the spokes from $x$ to the $a_i$ and from the $b_i$ to $y$. Then every vertex is a source or a sink of degree either $3$ or $n$. Since $3\mid n$, this is a modulo-$3$ orientation. This proves (b).

Now show (b) implies (c). If $(b)$ holds, assigning voltage $0$ to every edge of $AW_{2n}$ gives a $3$-fold cyclic cover consisting of $3$ disjoint copies of $AW_{2n}$. Since $AW_{2n}$ admits a nowhere-zero $3$-flow, this cyclic cover admits a nowhere-zero $3$-flow, proving $(c)$.

It remains to show that $(c) \Rightarrow (a)$. Suppose that a $3$-fold cyclic cover of $AW_{2n}$ admits a nowhere-zero $3$-flow, and fix a corresponding modulo-$3$ orientation. Every lifted rim vertex has degree $3$ and is therefore a source or a sink.

The lifted rim consists of either one cycle or three cycles. Along each such cycle, sources and sinks alternate, so all lifts of the $a_i$ on that cycle have the same type.

If all lifts of the $a_i$ have the same type, then every lift of $x$ is a source or a sink of degree $n$, forcing
$3\mid n$.

Otherwise, the lifted rim consists of three cycles. Reversing the entire orientation if necessary, we may assume that the the lifts of $a_i$ are sources on two of these cycles and sinks on the third. The total indegree over the three lifts of $x$ is then $2n$, while their total outdegree is $n$. Summing the modulo-$3$ conditions at these vertices gives $2n\equiv n\pmod3$. Hence $3\mid n$,  proving $(a)$ and completing the proof.   
\end{proof}

Thus, whenever $3 \nmid n$, $AW_{2n}$ has no $3$-fold cyclic cover admitting a nowhere-zero $3$-flow. In particular, for $n = 4$, the same conclusion holds for every odd $\ell \ge 3$.

\begin{proposition}\label{prop:D4}
    Let $\ell \ge 3$ be odd. No $\ell$-fold cyclic cover of $AW_8$ admits a nowhere-zero $3$-flow.
\end{proposition}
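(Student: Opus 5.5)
The plan is to argue via modulo-$3$ orientations, as in the proof of the preceding theorem, but to exploit the fact that $n=4$ forces a rigid local structure at the lifts of $x$ and $y$. Suppose for contradiction that $\widetilde G = AW_8(\alpha,\ell)$ admits a nowhere-zero $3$-flow for some odd $\ell\ge 3$. We fix a corresponding modulo-$3$ orientation, in which every vertex $v$ satisfies $d^+(v)\equiv d^-(v)\pmod 3$.

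\emph{Step 1 (local constraints).} Every lifted rim vertex $(a_i,j)$ or $(b_i,j)$ has degree $3$, so it must be a source or a sink. Every lift $(x,j)$ has degree $4$, so $d^+-d^-\in\{0,\pm2,\pm4\}$. The only value divisible by $3$ is $0$, hence each lift of $x$ has exactly two incoming and two outgoing edges. Summing over $j\in\Z_\ell$, the lifts of $x$ receive exactly $2\ell$ incoming spoke edges in total.

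\emph{Step 2 (structure of the lifted rim).} The rim $R$ is an $8$-cycle with some total voltage $t=\alpha(R)$. Its lift is a disjoint union of $c=\gcd(t,\ell)$ cycles, each of length $8\ell/c$. This is the standard fact that walking around $R$ shifts the index by $t$, so each component returns to its start after $\ell/c$ rounds. Each component traverses the rim pattern $a,b,a,b,\dots$, so it contains exactly $4\ell/c$ lifts of $a$-vertices. Since sources and sinks alternate along each component, all $a$-lifts on a component have the same type. Moreover, $c$ divides the odd number $\ell$, so $c$ is odd.

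\emph{Step 3 (counting).} Let $s$ be the number of components on which the $a$-lifts are sources. A source $(a_i,j)$ orients its spoke toward a lift of $x$, and a sink orients it away. Hence the lifts of $x$ receive $(4\ell/c)\,s$ incoming spoke edges and $(4\ell/c)(c-s)$ outgoing ones. By Step 1 both quantities equal $2\ell$, so $s=c-s$, i.e.\ $c=2s$ is even. This contradicts Step 2.

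The main point to verify carefully is Step 2, namely that all components of the lifted rim have the same number of $a$-lifts and that their number $c$ divides $\ell$. This follows from the explicit description of the lift of a cycle with total voltage $t$. Only the divisibility $c\mid\ell$ and the equal distribution are needed, and no use of $y$ is required.
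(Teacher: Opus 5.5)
Your proof is correct. It follows the same overall strategy as the paper: take a modulo-$3$ orientation, note that lifted rim vertices are sources or sinks, show that every lift of $x$ has outdegree exactly $2$, and reach a parity contradiction. The difference is in how you establish the uniform distribution of sources.

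The paper works fibre by fibre. Let $s_i$ and $r_i$ be the numbers of sources among the lifts of $a_i$ and $b_i$. The lifts of each rim edge form a perfect matching between two fibres, and each matched pair is a source--sink pair. Hence $s_i+r_i=\ell=r_i+s_{i+1}$, so all $s_i$ equal a common value $s$. Counting edges leaving the lifts of $x$ then gives $4(\ell-s)=2\ell$, i.e.\ $\ell=2s$, which contradicts the oddness of $\ell$ directly.

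You instead use the global cycle structure of the lifted rim: there are $c=\gcd(t,\ell)$ components, each meeting every fibre in $\ell/c$ vertices. This gives $c=2s$, and the contradiction comes from $c\mid\ell$.

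The paper's route is more elementary. It needs only that each edge lifts to a perfect matching between fibres, so it bypasses the step you flagged as needing care (the component count and equal distribution). Your route gives slightly more structure: the source/sink type is constant on each component, so each fibre count is $s\ell/c$. It also mirrors the argument the paper uses for the $3$-fold case in the preceding theorem, where the lifted rim consists of one or three cycles.
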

\begin{proof}
    Suppose, to the contrary, that some $\ell$-fold cyclic cover $\widetilde G$ of $AW_8$ admits a nowhere-zero $3$-flow. Let $\widetilde D$ be a corresponding modulo-$3$ orientation of $\widetilde G$. For each $i \in \Z_4$, let $s_i$ and $r_i$ denote the numbers of sources among the lifts of $a_i$ and $b_i$, respectively. Since every lifted rim edge joins a source and a sink, we have $s_i + r_i = \ell$ and $r_i + s_{i+1} = \ell$, and hence $s_i = s_{i+1}$ for every $i \in \Z_4$. Let $s$ be their common value. Consider the lifts of $x$. Since, for each $i \in \Z_4$, exactly $\ell-s$ lifts of $a_i$ are sinks, exactly $\ell-s$ lifted edges of $xa_i$ are directed away from the lifts of $x$. Hence the total outdegree of the lifts of $x$ is $4(\ell-s)$. On the other hand, since every lifted vertex of $x$ has degree $4$ and $\widetilde D$ is a modulo-$3$ orientation, its outdegree must be $2$. Thus the total outdegree of the lifts of $x$ is $2\ell$. This implies that $4(\ell - s) = 2\ell$, which gives $\ell = 2s$, a contradiction.
\end{proof}



\end{document}